\def\ov#1{{\overline{#1}}}
\newcommand{\h}{{\operatorname{h}}}
\newcommand{\Res}{{\operatorname{Res}}}
\newcommand{\Orb}{{\operatorname{Orb}}}
\newcommand{\bfa}{{\boldsymbol{a}}}
\newcommand{\bfi}{{\boldsymbol{i}}}
\newcommand{\bft}{{\boldsymbol{t}}}
\newcommand{\bfw}{{\boldsymbol{w}}}
\newcommand{\bfx}{{\boldsymbol{x}}}
\newcommand{\bfF}{{\boldsymbol{F}}}
\newcommand{\bfG}{{\boldsymbol{G}}}
\newcommand{\bfR}{{\boldsymbol{R}}}
\newcommand{\bfT}{{\boldsymbol{T}}}
\newcommand{\bfU}{{\boldsymbol{U}}}
\newcommand{\bfX}{{\boldsymbol{X}}}
\newcommand{\bfrho}{{\boldsymbol{\rho}}}
\newcommand{\PreP}{{\mathrm{PrePer}}}
\newcommand{\bfzero}{{\boldsymbol{0}}}
\newfont{\teneufm}{eufm10}
\newfont{\seveneufm}{eufm7}
\newfont{\fiveeufm}{eufm5}
\newtheorem{theorem}{Theorem}
\newtheorem{lemma}[theorem]{Lemma}
\newtheorem{cor}[theorem]{Corollary}
\newtheorem{rem}[theorem]{Remark}
\numberwithin{table}{section}
\numberwithin{equation}{section}
\numberwithin{figure}{section}
\numberwithin{theorem}{section}
\def\squareforqed{\hbox{\rlap{$\sqcap$}$\sqcup$}}
\def\qed{\ifmmode\squareforqed\else{\unskip\nobreak\hfil
\penalty50\hskip1em\null\nobreak\hfil\squareforqed
\parfillskip=0pt\finalhyphendemerits=0\endgraf}\fi}
\def\fA{{\mathfrak A}}
\def\cP{{\mathcal P}}
\def\cT{{\mathcal T}}
\def\ord{{\mathrm{ord}}}
\definecolor{olive}{rgb}{0.3, 0.4, .1}
\definecolor{dgreen}{rgb}{0.,0.6,0.}
\newcommand{\ignore}[1]{}
\def \C{\mathbb{C}}
\def \F{\mathbb{F}}
\def \K{\mathbb{K}}
\def \Z{\mathbb{Z}}
\def \R{\mathbb{R}}
\def \Q{\mathbb{Q}}
\def \N{\mathbb{N}}
\def \Z{\mathbb{Z}}
\def\mand{\qquad\mbox{and}\qquad}
\def\\{\cr}
\def\({\left(}
\def\){\right)}
\def\fl#1{\left\lfloor#1\right\rfloor}
\begin{document}
\title[Orbits Modulo Primes]
{Orbits of Polynomial Dynamical Systems Modulo Primes}

\author[Chang]
{\sc Mei-Chu Chang}
\address{Chang: Department of Mathematics, University of California.
Riverside,  CA 92521, USA}
\email{mcc@math.ucr.edu}
\urladdr{\url{http://mathdept.ucr.edu/faculty/chang.html}}

\author[D'Andrea]{Carlos D'Andrea}
\address{D'Andrea: Departament de Matem\`atiques i Inform\`atica, Universitat de Bar\-ce\-lona.
Gran Via~585, 08007 Barcelona, Spain}
\email{cdandrea@ub.edu}
\urladdr{\url{http://atlas.mat.ub.es/personals/dandrea}}

\author[Ostafe] {Alina Ostafe} 
\address{Ostafe: School of Mathematics and Statistics, University of New South Wales. 
Sydney, NSW 2052, Australia}
\email{alina.ostafe@unsw.edu.au}
\urladdr{\url{http://web.maths.unsw.edu.au/~alinaostafe}}

\author[Shparlinski]{Igor E. Shparlinski} 
\address{Shparlinski: School of Mathematics and Statistics, University of New South Wales.
Sydney, NSW 2052, Australia}
\email{igor.shparlinski@unsw.edu.au}
\urladdr{\url{http://web.maths.unsw.edu.au/~igorshparlinski}}

\author[Sombra]{Mart{\'\i}n~Sombra}
\address{Sombra: ICREA. Passeig Llu\'is Companys 23, 08010 Barcelona, Spain \vspace*{-2.5mm}}
\address{Departament de Matem\`atiques i
Inform\`atica, Universitat de Barcelona. Gran Via 585, 08007
Bar\-ce\-lo\-na, Spain}
\email{sombra@ub.edu}
\urladdr{\url{http://atlas.mat.ub.es/personals/sombra}}

\keywords{Algebraic dynamical system, preperiodic point, orbit length, 
  polynomial equations, resultant}

\subjclass[2010]{Primary 37P05; Secondary 11G25, 11G35, 13P15, 37P25}

\date{\today}

\begin{abstract} We present lower bounds for the orbit length of
 reduction modulo primes of parametric polynomial dynamical systems
  defined over the integers, under a suitable hypothesis on its set of
  preperiodic points over $\C$.  Applying recent results of Baker and DeMarco~(2011) 
  and  of Ghioca, Krieger, Nguyen and Ye~(2017),  we obtain explicit
  families of parametric polynomials and initial points such that the
  reductions modulo primes have long orbits, for all but a finite
  number of values of the parameters. This generalizes a previous
  lower bound due to Chang~(2015). As a by-product, we also 
  slighly improve a result of Silverman~(2008) and recover 
  a  result of Akbary and Ghioca~ (2009) 
  as special extreme cases of our estimates. 
\end{abstract}

\maketitle

\vspace{-6mm}
\section{Introduction}

Recently, there has been active interest in the study of orbits of
reductions modulo primes of algebraic dynamical systems defined over
$\Q$, see~\cite{AkGh,BGHKST,Chang,DOSS,Silv2}.  In this paper, we obtain
lower bounds for the orbit length of the reduction modulo primes of
dynamical systems defined by polynomials with integer coefficients,
under a suitable hypothesis on its set of preperiodic points over
$\C$.

One of the first results in this subject is due to 
Silverman~\cite{Silv2}, where he studies the orbit length for the the
reduction modulo a prime $p$ of a dynamical system on a
quasiprojective variety over a number field and a non-preperiodic
point.  In particular, he gives a weak lower bound for the length of
these orbits that is satisfied for every
$p$~\cite[Corollary~12]{Silv2}, and a stronger one that is satisfied
for almost all $p$, in the sense of the analytic
density~\cite[Theorem~1]{Silv2}. This latter lower bound has been slightly
improved by Akbary and Ghioca~\cite{AkGh}, who also show that it
holds for almost all $p$ in the sense of the natural density of
primes.

In~\cite{Chang}, Chang has given a result of a new type involving two
distinct orbits.  Let $F = X^d + T, \ G = X^d + a \in \Z[X,T]$ for a
fixed integer $d \ge 2$ and $a\in \Z[T]\setminus \Z$ with
$a^{d-1}\ne T^{d-1}$.  For a prime $p$, we denote by $\ov \F_p$ the
algebraic closure of $\F_p$. For $t\in \ov \F_{p}$, we set $F_{t}$ for
the map $\ov \F_{p}\to \ov\F_{p}$ defined by
$x\mapsto F_{t}(x)=F(x,t)$, and similarly for $G_{t}$.
By~\cite[Theorem~1]{Chang}, there are constants $c_1,c_2>0$ depending
only on $d$ and $a$ such that, for almost all $p$ (in the sense of the
natural density of primes) there is a set $\cT \subseteq \ov\F_p$ with
$\# \cT \le c_1$ such that, for all $t \in\ov\F_p \setminus \cT$,
\begin{equation}
\label{eq:Chang}
\max\left\{\# \Orb_{F_t}(0), \# \Orb_{G_t}(0)\right \} \ge c_2 \log p,
\end{equation}
where $ \Orb_{F_t}(0)$ and $\Orb_{G_t}(0)$ denote the orbits of
the point $0\in \ov\F_{p}$ in the dynamical systems given by the
iterations of $F_{t}$ and of $G_{t}$, respectively.  This theorem
relies on a previous result of Ghioca, Krieger and Nguyen~\cite{GKN16}
on the finiteness of the set of $t \in \C$ for which
$0 \in \PreP_\C(F_t) \cap \PreP_\C(G_t)$, the intersection of the sets
of preperiodic points of $F_t$ and of $G_{t}$.

Inspired by this result, in the present paper we study the length of
the orbits of the reduction of several parametric dynamical systems
and several starting points. In more precise terms, let
$\bfX=(X_{1},\ldots, X_{m})$ and $\bfT=(T_{1},\ldots, T_{n})$ be groups
of variables and, for $\nu =1, \ldots, r$, let
$\bfF_{\nu}=(F_{\nu,1},\ldots, F_{\nu,m}) \in \Z[\bfX, \bfT]^{m}$,
that we consider as family of $n$-parametric systems of $m$-variate
polynomials. Indeed, given a field $\K$ and a point $\bft=(t_{1},\ldots, t_{n})\in \K^{n}$, we denote
by $\bfF_{\bft}$ the map $\K^{m}\rightarrow \K^{m}$ defined, for
$\bfx\in \K^{m}$, by $\bfF_{\bft}(\bfx)=\bfF(\bfx,\bft)$.
Hence, the system $\bfF$ defines an $n$-parametric family of
polynomial dynamical systems on $\K^{m}$.  

Given a subset $S\subseteq \C^{m}$, 
an important problem in this context is to understand the size and the
structure of the set of points $\bft\in \C^n$ such that
\begin{equation}
\label{eq:9}
S\subseteq \bigcap_{\nu=1}^{r}
\PreP_\C(\bfF_{\nu,\bft}) .
\end{equation} 
Some particular cases of this problem have been  studied by Ghioca, Krieger and
Nguyen~\cite{GKN16}, Ghioca, Krieger, Nguyen and Ye~\cite{GKNY17}, and
Baker and DeMarco~\cite{BDeM11}.  Indeed, the set of preperiodic
points of an algebraic dynamical system over $\C$ is a classical
object of study.  Most of the results and conjectures in this subject
hint that, under suitable hypothesis, this set of preperiodic points
should be rather small, see also~\cite{BDeM13,GHT13,GHT15,GNT15,Ing12}.
The sparsity of these sets suggests that the set of parameters $\bft$ such 
that~\eqref{eq:9} holds should be small, typically finite or empty.

Our first main result in this paper (Theorem~\ref{thm:1}) gives a
lower bound for the orbit length of the reduction modulo primes of
algebraic dynamical systems depending on $n$ parameters, under the
assumption that the set of parameters $\bft\in \C^{n}$
satisfying~\eqref{eq:9} for a given subset of starting integer points
$S\subseteq \Z^{m}$ is finite.  Our proof consists of translating the
condition about the lengths of the orbits into a system of polynomial
equations with integer coefficients, to which we apply a result by
D'Andrea, Ostafe, Shparlinski and Sombra~\cite[Theorem~2.1]{DOSS}.

As a consequence, we recover a result in~\cite{AkGh}, and slightly
improve a result in~\cite{Silv2} (Corollaries~\ref{cor:1} and~\ref{cor:2}).  
Combined with results in~\cite{GKNY17} and in~\cite{BDeM11}, this gives explicit families of parametric polynomials
and initial points such that the reductions modulo primes have long
orbits, for all but a finite number of values for the parameters
(Corollaries~\ref{cor:5} and~\ref{cor:3}). In addition, Corollary~\ref{cor:5}
contains Chang's lower bound~\eqref{eq:Chang} as a particular case, and
sharpens the constant $c_{2}$ therein.

Our second main result (Theorem~\ref{thm:Cycl 2 Poly Univ}) applies to
the case  $n=1$, that is, to systems of polynomials depending on one
parameter. Here, we can strengthen Theorem~\ref{thm:1} to a
result that is valid for every prime.  Its proof follows by applying a
divisibility property for the resultant of two polynomials whose reduction
modulo a prime has several common roots, due to G\'omez-P\'erez,
Guti\'errez, Ibeas and Sevilla~\cite{GGIS}.

\section{Statement of the main results} \label{sec:stat-main-results}

Boldface symbols denote finite sets or sequences of objects, where the
type and number is clear from the context. 
For $m\ge 1$ and $n\ge 0$ we set $\bfX = (X_{1},\ldots, X_{m})$ and
$\bfT=(T_{1},\ldots, T_{n})$, which we consider as groups of {\it
  variables\/} and of {\it parameters\/}, respectively.

Given a system $\bfF=(F_{1}, \ldots, F_{m})\in \Z[\bfX,\bfT]^{m}$,
its iterations are given by
$$
\bfF^{(0)}=\bfX \mand
\bfF^{(k)}= \bfF( \bfF^{(k-1)}, \bfT) \quad \text{ for } k\ge 1. 
$$
For a field $\K$ and a point $\bft=(t_{1},\ldots, t_{n})\in \K^{n}$, we
consider the map
\begin{equation}
  \label{eq:12}
  \bfF_{\bft}\colon  \K^{m}\longrightarrow \K^{m}, \quad \bfx\longmapsto
  \bfF(\bfx,\bft).
\end{equation}

Hence, $\bfF$ defines an $n$-parametric family of polynomial dynamical
systems on $\K^{m}$. Given a vector $\bfw\in \K^{m}$, we denote by
$\Orb_{\bfF_{\bft}}(\bfw)$ the \emph{orbit} of $\bfw$ under the map
in~\eqref{eq:12}. Such a point is \emph{preperiodic} with respect to
$\bfF_{\bft}$ if its orbit is finite, and the set of these preperiodic
points is denoted by $\PreP_\K(\bfF_{\bft})$.  We refer
to~\cite{AnKhr,Schm,Silv1} for a background on these dynamical
systems.

As usual, we use  $\ord_{p}z $ to denote the $p$-adic order of $z \in \Z$

Although we are mostly interested in the case of $n$ parameters with
$n \ge 1$, we sometimes consider the non-parametric case when $n=0$
(thus $\K^0 = \{0\}$) and recover a result in~\cite{AkGh} and slightly
improve another result in~\cite{Silv2}.

For a vector $\bfa \in \Z^\ell$, we define its \emph{height}, denoted
by $\h(\bfa)$, as the logarithm of the maximum of the absolute values
of its coordinates, if $\bfa\ne\bfzero$, and as $0$ otherwise.  For a
polynomial $G$ with integer coefficients, its \emph{height}, denoted
by $\h(G)$, is defined as the height of its vector of coefficients.
For a family of polynomials $\bfG=(G_{1},\ldots, G_\ell)$ with
integer coefficients, we respectively define its \emph{degree} and
\emph{height} as
$$
  \deg \bfG=\max_{1\le i\le \ell}\deg G_{i} \mand \h(\bfG)=\max_{1\le
    i\le \ell}\h(G_{i}).
$$

Given functions 
$$
f,g\colon \N\longrightarrow \R, 
$$
the symbol $f\ll g$ means that there is a constant $c\ge0$ such that
$|f(k)|\le c \, g(k)$ for all $k\in \N$.  To emphasize the dependence of
the implied constant $c$ on a list of parameters $\bfrho$, we write
$f\ll_{\bfrho} g$.

We first present a lower bound for the length of the orbits of
reduction modulo primes of several parametric multivariate polynomial
systems and several initial points.

\begin{theorem} \label{thm:1} Let $\bfF_{\nu} \in \Z[\bfX,\bfT]^{m}$,
$\nu =1, \ldots, r$, be a family of $r\ge 1$ parametric
systems of polynomials and $\bfa_{j}\in \Z^{m}$, $j=1,\ldots, s$, a
family of $s\ge 1$ integer vectors, such that the set
\begin{equation}
  \label{eq:13}
\{\bft\in \C^n~:~\bfa_{j}\in \PreP_\C(\bfF_{\nu,\bft})
\text{ for all } \nu, j\}
\end{equation}
is empty if $n=0$, or finite if $n\ge1$. Set $\kappa$ for the
cardinality of this set, and let $d\ge
\max\{2,\deg \bfF_{\nu}\}$ for all $\nu$ and $h\ge \max\{\h(\bfF_{\nu}), \h(\bfa_{j})\}$
for all $\nu$ and $j$. Let also $L\ge 1$. Then there is an integer
$\fA_{L}\ge 1$ with
$$
\log \fA_{L} \ll_{m,n,r,s,h}
\begin{cases} Ld^{L} & \mathrm{ if } \ n= 0, \\ L^{3n+3}d^{(3n+2)L} &
\mathrm{ if } \ n\ge 1,
\end{cases}
$$
 such that,
for every prime $p$ not dividing $ \fA_{L}$, for all but at most
$\kappa$ values of $\bft \in \ov \F_p^n$,
$$
\max_{\substack{1\le \nu \le r \\1\le j \le s}}
\#\Orb_{\bfF_{\nu,\bft}}(\bfa_{j} \bmod p) > L.
$$
\end{theorem}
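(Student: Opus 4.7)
The plan is to encode the condition ``$\#\Orb_{\bfF_{\nu,\bft}}(\bfa_j)\le L$ for all $\nu,j$'' as the vanishing of an explicit polynomial system in $\bft$, observe that its complex zero locus has cardinality at most $\kappa$ by hypothesis~\eqref{eq:13}, and then invoke the effective reduction-modulo-$p$ estimate~\cite[Theorem~2.1]{DOSS}. Concretely, for each $(\nu,j)$ and each $0 \le k < \ell \le L$, set
$$
\boldsymbol{P}_{\nu,j,k,\ell}(\bfT) \;:=\; \bfF_\nu^{(k)}(\bfa_j,\bfT) - \bfF_\nu^{(\ell)}(\bfa_j,\bfT) \;\in\; \Z[\bfT]^m.
$$
Over any field $\K$ and for any $\bft \in \K^n$, one has $\#\Orb_{\bfF_{\nu,\bft}}(\bfa_j) \le L$ if and only if $\boldsymbol{P}_{\nu,j,k,\ell}(\bft) = \bfzero$ for some $0\le k<\ell\le L$. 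Hence the \emph{bad set}
$$
B \;:=\; \bigl\{\bft :\, \#\Orb_{\bfF_{\nu,\bft}}(\bfa_j) \le L \text{ for all } \nu,j\bigr\}
$$
is an intersection of unions of zero loci, and $B(\C^n)$ is contained in the set of~\eqref{eq:13}, so $|B(\C^n)| \le \kappa$. Standard recursive bounds on iterated compositions yield the somewhat finer estimates
$$
\deg \boldsymbol{P}_{\nu,j,k,\ell} \le d^\ell \mand \h(\boldsymbol{P}_{\nu,j,k,\ell}) \ll_{m,n,h} d^\ell,
$$
which will be crucial for matching the target bound.

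In the non-parametric case $n=0$, the emptiness of~\eqref{eq:13} produces a pair $(\nu_0, j_0)$ with $\bfa_{j_0} \notin \PreP_\C(\bfF_{\nu_0})$, so every integer vector $\boldsymbol{P}_{\nu_0,j_0,k,\ell}$ is non-zero. Picking a non-zero coordinate $p_{k,\ell} \in \Z$ from each and setting $\fA_L := \prod_{0\le k<\ell\le L}|p_{k,\ell}|$, we obtain
$$
\log \fA_L \;\le\; \sum_{0 \le k<\ell\le L}\h(\boldsymbol{P}_{\nu_0,j_0,k,\ell}) \;\ll_{m,n,h}\; \sum_{\ell=1}^L \ell\, d^\ell \;\ll\; L\, d^L,
$$
matching the claimed bound. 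Any prime $p \nmid \fA_L$ then keeps the $L+1$ iterates $\bfF_{\nu_0}^{(k)}(\bfa_{j_0})$, $0 \le k \le L$, pairwise distinct modulo $p$, forcing $\#\Orb_{\bfF_{\nu_0}}(\bfa_{j_0} \bmod p) > L$.

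In the parametric case $n \ge 1$, convert the intersection-of-unions description of $B$ into a single polynomial system via the elementary identity
$$
\bigcup_\alpha \{\boldsymbol{g}_\alpha = \bfzero\} \;=\; \bigl\{\textstyle\prod_\alpha g_{\alpha,i_\alpha} = 0 \text{ for all index choices } (i_\alpha)\bigr\}
$$
applied to each inner union $\bigcup_{k<\ell}\{\boldsymbol{P}_{\nu,j,k,\ell} = \bfzero\}$, and concatenate the resulting systems over $(\nu,j)$. Thanks to the refined height estimate above, each product polynomial has degree and height of order only $L\, d^L$, rather than the naive $L^2 d^L$. Feeding this system, whose complex zero locus has at most $\kappa$ points, into~\cite[Theorem~2.1]{DOSS} yields the desired $\fA_L$ with $\log \fA_L \ll_{m,n,r,s,h} L^{3n+3} d^{(3n+2)L}$ such that $|B(\ov\F_p^n)| \le \kappa$ for every $p \nmid \fA_L$. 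The main obstacle lies in this last step: the shape $L^{3n+3}d^{(3n+2)L}$ reflects the precise dependence of~\cite[Theorem~2.1]{DOSS} on $n$, on the number of polynomials (of order $m^{\binom{L+1}{2}}$ per pair $(\nu,j)$), and on their degrees and heights, and the combinatorial blow-up must be absorbed without extra $L$-factors, which is exactly what the sharper bound $d^\ell$ (versus $d^L$) enables.
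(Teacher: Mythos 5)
Your overall strategy is the same as the paper's: encode ``$\#\Orb_{\bfF_{\nu,\bft}}(\bfa_j)\le L$ for all $\nu,j$'' as the vanishing of a polynomial system in $\bfT$, note that the complex zero locus of that system has at most $\kappa$ points by hypothesis, and invoke \cite[Theorem~2.1]{DOSS}. Your treatment of the case $n=0$ is correct and gives the stated bound $\ll Ld^L$. However, for $n\ge 1$ your proof, as written, does not reach the claimed exponent $L^{3n+3}$.

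The issue is the choice of defining equations. You characterize $\#\Orb\le L$ by ``$\bfF^{(k)}(\bfa_j,\bft)=\bfF^{(\ell)}(\bfa_j,\bft)$ for some $0\le k<\ell\le L$'', which is a union over $\binom{L+1}{2}$ alternatives; distributing the product over this union then produces $m^{\binom{L+1}{2}}$ polynomials per pair $(\nu,j)$. The paper instead uses the equivalent, more economical characterization ``$\bfF^{(L)}(\bfa_j,\bft)=\bfF^{(k)}(\bfa_j,\bft)$ for some $0\le k\le L-1$'' (if the first $L+1$ iterates repeat, the orbit has already entered its cycle by step $L-1$, so the $L$-th iterate must equal an earlier one), which has only $L$ alternatives and hence $m^{L}$ product generators per $(\nu,j)$. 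The bound in \cite[Theorem~2.1]{DOSS} contains a factor $\log\big((2n+5)s\big)\,d^{3n+2}$ where $s$ is the number of polynomials: with $s\asymp r s\,m^{L}$ this contributes one factor of $L$, giving $L^{3n+3}d^{(3n+2)L}$, but with $s\asymp r s\,m^{L(L+1)/2}$ it contributes $L^{2}$, giving $L^{3n+4}d^{(3n+2)L}$. Your observation that the sharper per-factor bound $\deg\boldsymbol{P}_{\nu,j,k,\ell}\le d^{\ell}$ keeps the degree and height of each product at $\ll Ld^{L}$ is correct, but that only controls the $d^{3n+1}h$ and $d^{3n+2}$ factors; it does nothing to the $\log(\text{number of polynomials})$ factor, which is where the extra $L$ enters. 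So the step you flag as the ``main obstacle'' is indeed not absorbed by your refined degree estimate, and your argument proves the theorem only with the weaker exponent $L^{3n+4}$. The fix is simply to replace the full pigeonhole family of pairs $(k,\ell)$ by the pairs $(k,L)$, $0\le k\le L-1$, as in the paper.
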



\begin{rem}
  \label{rem:1}
  When $n=0$, the case $r=s=1$ already contains the cases when $r$ and
  $s$ are arbitrary. Indeed, we recall that $\C^{0}=\{0\}$
  and so $\bft=0$. Now, let $\bfF_{\nu} \in \Z[\bfX]^{m}$,
  $\nu =1, \ldots, r$, and $\bfa_{j}\in \Z^{m}$, $j=1,\ldots, s$, such
  that the set in~\eqref{eq:13} is empty.  Note that accordingly to our general convention
  there is only one possible specialisation   of $\bfF_{\nu}$ with $\bft=0$ 
  and  $\bfF_{\nu, 0} = \bfF_{\nu}$.  The previous condition then implies that there exist
  $\nu_{0}$ and $j_{0}$ such that
$$
    \bfa_{j_{0}}\notin  \PreP_\C(\bfF_{\nu_{0},0}). 
$$
Theorem~\ref{thm:1} applied to this system and this initial point
implies that, for all $p\nmid \fA_L$, 
$$
\#\Orb_{\bfF_{\nu,0}}(\bfa_{j} \bmod p) > L,
$$
which gives the conclusion for the whole families $\bfF_{\nu}$,
$\nu=1,\ldots, r$, and $\bfa_{j}$, $j=1,\ldots, s$. 
\end{rem}

We have the following result for all primes.

\begin{cor} \label{cor:1} With conditions as in Theorem~\ref{thm:1},
  for any $0<\varepsilon <\frac{1}{(3n+2)\log d}$, 
  there exists a constant $c$ depending only on $m,n,r,s, h$ and $\varepsilon$ such
  that, for all $p\ge c$ and all but at most $\kappa$ values of
  $\bft\in \ov \F_p^n$,
 $$
\max_{\substack{1\le \nu \le r \\1\le j \le s}}\# \Orb_{\bfF_{\nu,
    \bft}}( \bfa_{j} \bmod p) > \varepsilon\, {\log\log p}.
$$
When $n=0$, this conclusion also holds for any
$0<\varepsilon <\frac{1}{\log d}$.
\end{cor}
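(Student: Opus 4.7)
The plan is to apply Theorem~\ref{thm:1} with the specific parameter choice $L = \lfloor \varepsilon \log\log p \rfloor$ and to verify that, for all sufficiently large primes $p$, this choice forces $p \nmid \fA_L$. The simplest way to secure $p \nmid \fA_L$ is via the trivial implication $\fA_L < p \Rightarrow p \nmid \fA_L$, so it suffices to show that $\log \fA_L < \log p$ for $p$ beyond some threshold depending only on $m,n,r,s,h,\varepsilon$.

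Substituting $L = \lfloor \varepsilon \log\log p \rfloor$ into the estimate of Theorem~\ref{thm:1} and using the identity $d^{L} \le d^{\varepsilon \log\log p} = (\log p)^{\varepsilon \log d}$, the bound becomes
\[
\log \fA_L \;\ll_{m,n,r,s,h,\varepsilon}\; (\log\log p)^{3n+3}\,(\log p)^{(3n+2)\varepsilon \log d}
\]
when $n \ge 1$, and analogously $\log \fA_L \ll_{m,r,s,h,\varepsilon} (\log\log p)\,(\log p)^{\varepsilon \log d}$ when $n=0$. The hypotheses on $\varepsilon$ are calibrated precisely so that the exponent of $\log p$ on the right-hand side is strictly less than $1$; hence the right-hand side is $o(\log p)$, and therefore there exists a threshold $c = c(m,n,r,s,h,\varepsilon)$ beyond which $\fA_L < p$, which gives $p \nmid \fA_L$.

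For such $p$, Theorem~\ref{thm:1} directly yields $\max \#\Orb_{\bfF_{\nu,\bft}}(\bfa_j \bmod p) > L$ for all but at most $\kappa$ values of $\bft \in \ov\F_p^n$. Since orbit sizes are integers, this strict inequality upgrades to $\max\# \Orb \ge L+1 > \varepsilon \log\log p$, which is the stated conclusion. There is no real technical obstacle in the argument---it is a direct substitution followed by a routine asymptotic comparison---but the content of the corollary lies in the matching of constants: the critical thresholds $\frac{1}{(3n+2)\log d}$ and $\frac{1}{\log d}$ are precisely the points at which the dominant factor $d^{(3n+2)L}$ (respectively $d^{L}$) in the bound on $\fA_L$ ceases to be polynomially controllable by $\log p$.
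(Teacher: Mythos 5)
Your proposal is correct and follows exactly the same route as the paper: apply Theorem~\ref{thm:1} with $L=\lfloor\varepsilon\log\log p\rfloor$, observe that the hypothesis on $\varepsilon$ makes the resulting bound on $\log\fA_L$ grow as $(\log p)^{\theta}$ with $\theta<1$ (times harmless $\log\log p$ factors), hence $\fA_L<p$ for $p$ beyond a threshold depending only on $m,n,r,s,h,\varepsilon$, which forces $p\nmid\fA_L$. Your explicit remark that the strict inequality $>L$ upgrades to $>\varepsilon\log\log p$ because orbit lengths are integers is a small clarification that the paper leaves implicit, but it is the same argument.
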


This result applied to a polynomial system $\bfF\in \Z[\bfX]^{m}$ and
a point $\bfa\in \Z^{m}$ with infinite orbit with respect to the map
$\bfF\colon \C^{m}\to \C^{m}$, shows that there is a constant $c(m,h)$ such
that, for every $p \ge c(m,h)$,
 $$
\#\Orb_{\bfF}( \bfa_{j} \bmod p) > \frac{\log\log p}{\log d}.
$$
This refines the lower bound in~\cite[Corollary~12]{Silv2} for a
dynamical system on the affine space defined by polynomials with
integer coefficients, by giving its explicit dependence on the degree
of $\bfF$.
 
For a subset $\cP$ of the set of primes, its
\emph{natural density} is defined as the real number
$$
  \lim_{Q\to \infty} \frac{\# \{ p\in \cP~:~p\le Q\}}{\# \{ p\
    \mathrm{ prime} ~:~ p\le Q\}},
$$
whenever this limit exists.  We can also deduce from
Theorem~\ref{thm:1} the following stronger lower bound for the length
of the orbits of the system $\bfF$ modulo a prime $p$ that is valid
for almost all  primes $p$, in the sense of the natural density of this set.

\begin{cor} \label{cor:2} 
 Under the conditions of Theorem~\ref{thm:1},
  for any $0<\varepsilon <\frac{1}{(3n+2)\log d}$, the
  set of primes $p$ such that, for all but at most $\kappa$ values of
  $\bft\in \ov \F_p^n$,
 $$
\max_{\substack{1\le \nu \le r \\1\le j \le s}} 
\# \Orb_{\bfF_{\nu, \bft}}( \bfa_{j} \bmod p) \ge \varepsilon \log p,
$$
has natural density 1. When $n=0$, this conclusion also holds for any
$0<\varepsilon <\frac{1}{\log d}$.
 \end{cor}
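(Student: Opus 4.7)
The plan is to deduce Corollary~\ref{cor:2} from Theorem~\ref{thm:1} by letting the auxiliary parameter $L$ depend on $p$. Specifically, for each prime $p$ I would set
$$
L_p = \lfloor \varepsilon \log p \rfloor,
$$
and apply Theorem~\ref{thm:1} with $L = L_p$. Whenever $p\nmid \fA_{L_p}$, the theorem gives $\#\Orb_{\bfF_{\nu,\bft}}(\bfa_j\bmod p)>L_p$ for all but $\kappa$ choices of $\bft\in\ov\F_p^n$, and since orbits have integer cardinality this yields $\#\Orb\ge L_p+1\ge \varepsilon\log p$, which is exactly the conclusion sought.

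It then remains to show that the set of \emph{bad primes}, i.e.\ primes $p$ dividing $\fA_{L_p}$, has natural density $0$. For $Q\ge 2$, set $L_0=\lfloor \varepsilon\log Q\rfloor$. Any bad prime $p\le Q$ satisfies $L_p\le L_0$, so it must divide $\fA_L$ for some $L\in\{1,\dots,L_0\}$. The number of such primes is therefore at most
$$
\sum_{L=1}^{L_0}\omega(\fA_L)\ \le\ \sum_{L=1}^{L_0}\frac{\log \fA_L}{\log 2},
$$
where $\omega(\cdot)$ denotes the number of distinct prime divisors. Plugging in the bound $\log\fA_L\ll_{m,n,r,s,h} L^{3n+3}d^{(3n+2)L}$ from Theorem~\ref{thm:1} (case $n\ge 1$), the summands grow geometrically in $L$, so the sum is dominated (up to a constant depending on $d,n$) by its last term $L_0^{3n+3}d^{(3n+2)L_0}$.

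Substituting $L_0\le \varepsilon\log Q$ and using $d^{(3n+2)L_0}\le Q^{(3n+2)\varepsilon\log d}$, the number of bad primes up to $Q$ is
$$
\ll_{m,n,r,s,h,\varepsilon} (\log Q)^{3n+3}\, Q^{(3n+2)\varepsilon\log d}.
$$
Dividing by $\pi(Q)\sim Q/\log Q$, one obtains an upper bound of order $(\log Q)^{3n+4}Q^{(3n+2)\varepsilon\log d-1}$, which tends to $0$ precisely because the hypothesis $\varepsilon<1/((3n+2)\log d)$ makes the exponent of $Q$ negative. Hence the bad primes have natural density zero, so the complementary set has density~$1$, as claimed. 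For the case $n=0$ the same argument uses the sharper estimate $\log \fA_L\ll Ld^L$, which reproduces the threshold $\varepsilon<1/\log d$. The only step requiring any real care is the geometric-series bookkeeping in the union-bound over $L\le L_0$; once this is written out, everything else is a direct substitution.
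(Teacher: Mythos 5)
Your proof is correct, but it organizes the counting differently from the paper. You let the parameter $L$ depend on each prime (setting $L_p=\lfloor\varepsilon\log p\rfloor$), so a "bad" prime $p\le Q$ could divide any of $\fA_1,\dots,\fA_{L_0}$, and you control the union by summing $\log\fA_L$ over $L\le L_0$ and observing that the geometric growth of $\log\fA_L$ concentrates the sum in the last term. The paper avoids this sum entirely: for each fixed $Q$ it applies Theorem~\ref{thm:1} once, with the single value $L=\lfloor\varepsilon\log Q\rfloor$, noting that if $p\le Q$ and $p\nmid\fA_L$ then $\#\Orb>L\ge\varepsilon\log Q\ge\varepsilon\log p$; the bad primes $\le Q$ are then simply the prime divisors of this one integer $\fA_L$, at most $\log\fA_L/\log 2$ of them, which is $O(Q^{\gamma})$ directly. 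Both give the same density conclusion and the same threshold $\varepsilon<1/((3n+2)\log d)$; the paper's version is tidier because it needs no geometric-series estimate and no separate treatment of small primes with $L_p=0$ (which your version must quietly discard, since Theorem~\ref{thm:1} requires $L\ge1$ — a finite, hence harmless, exclusion, but one you should state). The gain from your version is that it more directly exhibits, for each individual $p$, which auxiliary integer certifies that $p$ is good.
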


 For a  polynomial system $\bfF\in \Z[\bfX]^{m}$ and a
 point $\bfa\in \Z^{m}$ with infinite orbit over $\C$, Corollary~\ref{cor:2} 
  recovers~\cite[Theorem~1.1(1)]{AkGh}. 

 The result of Ghioca, Krieger, Nguyen and Ye in~\cite{GKNY17}
 mentioned in the introduction implies that, for $d\ge 2$ and
 $u,v\in\Z[T]\setminus \Z$ such that $u^{d-1}\ne v^{d-1}$, the set of
 $t\in \C$ such that the point $0\in \C$ is preperiodic both for the
 map $x\mapsto x^{d}+u(t)$ and the map $x\mapsto x^{d}+v(t)$, is
 finite.

The following result is a direct consequence of
Corollaries~\ref{cor:1} and~\ref{cor:2}.  It generalizes Chang's lower
bound~\eqref{eq:Chang} to a larger family of pairs of polynomials and,
moreover, it refines the value of the constant $c_{2}$ in that lower
bound.

 \begin{cor}
   \label{cor:5}
   Let $d\ge 2$ and $u,v\in\Z[T]\setminus \Z$ such that
   $u^{d-1}\ne v^{d-1}$. Then, for any $0<\varepsilon <\frac{1}{ 5\log d}$, there exists $\kappa \ge 0$ such that,
   for every sufficiently large $p$ and all but at most $\kappa$
   values of $t\in \ov \F_p$,
 $$
 \max\left\{\# \Orb_{x^{d}+u(t)}(0), \# \Orb_{x^{d}+v(t)}(0)\right \} >
 \varepsilon \, {\log\log p}.$$
 Furthermore, the set of primes $p$ such that, for all but at most
 $\kappa$ values of $t\in \ov \F_p$,
 $$
     \max\left\{\# \Orb_{x^{d}+u(t)}(0), \# \Orb_{x^{d}+v(t)}(0)\right \}  \ge \varepsilon \log p,
$$
has natural density 1.
 \end{cor}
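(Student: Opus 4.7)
The plan is to deduce the statement as a direct specialisation of Corollaries~\ref{cor:1} and~\ref{cor:2} applied to the family of one-parametric univariate systems
$$
\bfF_{1}(X,T)=X^{d}+u(T), \quad \bfF_{2}(X,T)=X^{d}+v(T)\in \Z[X,T],
$$
together with the single integer starting point $\bfa_{1}=0\in \Z$. Here we are in the setting $m=1$, $n=1$, $r=2$ and $s=1$, with $\deg \bfF_{\nu}\le \max\{d,\deg u,\deg v\}$ and $h$ bounded in terms of the heights of the coefficients of $u$ and $v$.

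The only hypothesis in Theorem~\ref{thm:1} that requires verification is the finiteness of the exceptional set~\eqref{eq:13}, which in our situation reads
$$
\bigl\{t\in \C\ :\ 0\in \PreP_{\C}(x^{d}+u(t))\cap \PreP_{\C}(x^{d}+v(t))\bigr\}.
$$
This is precisely the content of the theorem of Ghioca, Krieger, Nguyen and Ye~\cite{GKNY17} recalled just before the statement, whose hypotheses $d\ge 2$, $u,v\in \Z[T]\setminus \Z$ and $u^{d-1}\ne v^{d-1}$ coincide with ours. Denote by $\kappa$ the cardinality of this finite set; then the hypothesis of Theorem~\ref{thm:1} is satisfied with this value of $\kappa$, and we may apply Corollaries~\ref{cor:1} and~\ref{cor:2}.

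With $n=1$ the admissible range in Corollaries~\ref{cor:1} and~\ref{cor:2} becomes $0<\varepsilon<\tfrac{1}{(3n+2)\log d}=\tfrac{1}{5\log d}$, which is exactly the range stated. Corollary~\ref{cor:1} then yields the asserted $\varepsilon\log\log p$ lower bound for every sufficiently large prime $p$ and for all but at most $\kappa$ values of $t\in \ov\F_{p}$, while Corollary~\ref{cor:2} yields the stronger $\varepsilon\log p$ lower bound for a set of primes of natural density~$1$, again with the same exceptional set of size at most $\kappa$.

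There is essentially no analytic obstacle; the whole argument is a bookkeeping exercise in specialising the parameters $(m,n,r,s)$ and invoking~\cite{GKNY17} to supply the finiteness hypothesis. The only subtle point is cosmetic: the corollaries are stated in terms of a bound $d\ge \max\{2,\deg \bfF_{\nu}\}$, so one must take $d$ in Corollaries~\ref{cor:1}--\ref{cor:2} to be $\max\{d,\deg u,\deg v\}$; however, since $\log d$ in that denominator only makes the admissible range of $\varepsilon$ smaller, one may then restrict attention to the range $0<\varepsilon<\tfrac{1}{5\log d}$ appearing in the statement, completing the proof.
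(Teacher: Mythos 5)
Your overall plan — specialise Corollaries~\ref{cor:1} and~\ref{cor:2} with $m=1$, $n=1$, $r=2$, $s=1$, $\bfF_{1}=X^{d}+u(T)$, $\bfF_{2}=X^{d}+v(T)$, $\bfa_{1}=0$, and invoke the theorem of Ghioca, Krieger, Nguyen and Ye from~\cite{GKNY17} to secure the finiteness hypothesis~\eqref{eq:13} — is exactly the derivation the paper intends, and everything up to your last paragraph is fine.

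The issue is that last paragraph, which you label as ``cosmetic'' but which is in fact a genuine gap, and your proposed resolution runs in the wrong direction. If you are forced to feed $D=\max\{d,\deg u,\deg v\}$ into Corollaries~\ref{cor:1} and~\ref{cor:2} (because those corollaries require $d\ge\max\{2,\deg\bfF_{\nu}\}$), the admissible range they certify is $0<\varepsilon<\tfrac{1}{5\log D}$. When $\deg u>d$ or $\deg v>d$, this range is a \emph{proper subset} of the range $0<\varepsilon<\tfrac{1}{5\log d}$ asserted in Corollary~\ref{cor:5}. You cannot ``restrict attention'' to the larger range to close this gap: for $\varepsilon\in\bigl[\tfrac{1}{5\log D},\tfrac{1}{5\log d}\bigr)$, Corollaries~\ref{cor:1} and~\ref{cor:2} say nothing, while Corollary~\ref{cor:5} makes a claim. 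Note also that the paper explicitly advertises Corollary~\ref{cor:5} as \emph{refining} Chang's constant $c_{2}$ from~\eqref{eq:Chang}, which depended on $a$, to a constant $\tfrac{1}{5\log d}$ depending only on the exponent $d$; so the dependence on $\deg u,\deg v$ really must be eliminated, not absorbed into the choice of $d$.

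To actually obtain $\tfrac{1}{5\log d}$ with $d$ the exponent one has to reopen the degree bookkeeping inside the proof of Theorem~\ref{thm:1}. For $F(X,T)=X^{d}+u(T)$ and the constant initial point $0$, a direct induction gives $\deg_{T}F^{(k)}(0,T)=(\deg u)\,d^{k-1}$ for $k\ge1$, rather than the bound $(\max\{d,\deg u\})^{k}$ that Lemma~\ref{lem:HeighIter-Poly} provides for the general iterate. Consequently the auxiliary polynomials $\Psi_{\nu,\bfi,j}$ in that proof have degree and height $\ll_{\deg u,\deg v,h} Ld^{L}$ with $d$ the exponent and $\deg u,\deg v$ only entering through the implied constant; feeding these sharpened bounds into Theorem~\ref{thm:2} then yields $\log\fA_{L}\ll L^{3n+3}d^{(3n+2)L}$ with $d$ the exponent, and hence the stated range $0<\varepsilon<\tfrac{1}{5\log d}$. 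That extra analysis — and not the blind substitution $d\mapsto\max\{d,\deg u,\deg v\}$ — is what completes the proof.
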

 
 Another instance where our results can be applied is given by the
result of Baker and DeMarco~\cite[Theorem~1.1]{BDeM11} 
mentioned in the introduction: given
 $d\ge 2$ and $a_{1},a_{2}\in \Z$, the set of $t\in\C$ such that both
 $a_1$ and $a_2$ are preperiodic for the map $X\mapsto X^d+t$ is
 infinite if and only if $a_1^d=a_2^d$. The
 following result is also a direct consequence of
 Corollaries~\ref{cor:1} and~\ref{cor:2}.

\begin{cor}
  \label{cor:3}
  Let $d\ge 2$ and $a_{1},a_{2}\in \Z$ with $a_{1}^{d}\ne
  a_{2}^{d}$.
  Then, for any $0<\varepsilon <\frac{1}{ 5\log d}$,  there exists $\kappa \ge 0$ such that, for every
  sufficiently large $p$ and all but at most $\kappa$ values of
  $t\in \ov \F_p$,
 $$
\max \left\{ \# \Orb_{X^{d}+t}(a_{1} \bmod p) , \# \Orb_{X^{d}+t}(a_{2}
\bmod p) \right\} >  \varepsilon \, {\log\log p}.
$$
Furthermore, the
set of primes $p$ such that, for all but at most $\kappa$ values of
$t\in \ov \F_p$,
 $$
\max \{ \# \Orb_{x^{d}+t}(a_{1} \bmod p) , \# \Orb_{x^{d}+t}(a_{2} \bmod p) \} \ge \varepsilon \log p,
$$
has natural density 1.
\end{cor}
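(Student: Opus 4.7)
The plan is to apply Corollaries~\ref{cor:1} and~\ref{cor:2} directly to the one-parameter family of polynomials $\bfF = (X^{d}+T) \in \Z[X,T]$ together with the two integer initial points $a_{1},a_{2}\in \Z$. In the notation of Theorem~\ref{thm:1}, this corresponds to the choices $m=n=r=1$ and $s=2$, with $d\ge 2$ and the parameter $h=\max\{\h(X^{d}+T),\h(a_{1}),\h(a_{2})\}$ absorbed into the implied constants.

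The only hypothesis of Theorem~\ref{thm:1} that requires verification is the finiteness of the exceptional set
$$
\bigl\{t\in \C \ : \ a_{1},a_{2}\in \PreP_{\C}(X^{d}+t)\bigr\}.
$$
This is precisely the content of the theorem of Baker and DeMarco quoted in~\cite[Theorem~1.1]{BDeM11}: the displayed set is infinite if and only if $a_{1}^{d}=a_{2}^{d}$. Since we assume $a_{1}^{d}\neq a_{2}^{d}$, the set is finite; let $\kappa$ denote its cardinality, which depends only on $d$, $a_{1}$, $a_{2}$.

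With this hypothesis checked, both Corollary~\ref{cor:1} and Corollary~\ref{cor:2} apply to our family, with $n=1$, and their admissible range $0<\varepsilon<\tfrac{1}{(3n+2)\log d}$ specialises to $0<\varepsilon<\tfrac{1}{5\log d}$, matching the statement. Applying Corollary~\ref{cor:1} yields the $\log\log p$ lower bound valid for every sufficiently large prime $p$ and all but at most $\kappa$ values of $t\in\overline{\F}_{p}$, while Corollary~\ref{cor:2} yields the stronger $\log p$ lower bound for a set of primes of natural density $1$, again with at most $\kappa$ exceptional values of $t$ for each such prime.

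There is essentially no obstacle: all the analytic work is encoded in Theorem~\ref{thm:1} and its corollaries, and all the arithmetic-geometric input — the finiteness of the simultaneous preperiodicity locus — is supplied by~\cite[Theorem~1.1]{BDeM11}. The only care needed is to confirm that the parameters $(m,n,r,s)=(1,1,1,2)$ produce exactly the exponent $3n+2=5$ appearing in the bound on $\varepsilon$, so that our hypothesis matches the admissible range given by the corollaries.
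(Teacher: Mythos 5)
Your proposal is correct and matches the paper's approach exactly: the paper derives Corollary~\ref{cor:3} as a direct consequence of Corollaries~\ref{cor:1} and~\ref{cor:2} applied to $F=X^d+T$ with the two starting points $a_1,a_2$, with the finiteness of the exceptional parameter set supplied by Baker--DeMarco~\cite[Theorem~1.1]{BDeM11}, and $n=1$ giving the exponent $3n+2=5$ in the admissible range for $\varepsilon$.
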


For systems depending on a single parameter $T$, we can strengthen
Theorem~\ref{thm:1} to a result that is valid for every prime. 

\begin{theorem}
\label{thm:Cycl 2 Poly Univ} 
Let $\bfF_{\nu} \in \Z[\bfX,T]^{m}$,
$\nu =1, \ldots, r$, be a family of $r\ge 1$ parametric
systems of polynomials and $\bfa_{j}\in \Z^{m}$, $j=1,\ldots, s$, a
family of $s\ge 1$ integer vectors, such that the set
\begin{equation}
\label{eq:6}
\{t\in \C ~:~\bfa_{j}\in \PreP_\C(\bfF_{\nu,t})
\text{ for all } \nu, j\}
\end{equation} 
is finite. Set $\kappa$ for the cardinality of this set, and
let $d\ge \max\{2,\deg \bfF_{\nu}\}$ for all $\nu$ and
$h\ge \max\{\h(\bfF_{\nu}), \h(\bfa_{j})\}$ for all $\nu$ and $j$.  Let also
$L\ge 1$.  Then there is an integer $\fA_{L}\ge 1$ with
$$
\log \fA_{L} \ll_{m,r,s,h} L^{2}d^{2L},
$$
such that, for every prime $p$, for all but at
most $\kappa+ \ord_{p}\,\fA_{L}$ values of $t \in \ov \F_p$,
$$
\max_{\substack{1\le \nu \le r \\ 1\le j \le s}}\#
\Orb_{\bfF_{\nu,t}}(\bfa_{j} \bmod p) > L.
$$
\end{theorem}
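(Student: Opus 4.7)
The plan is to encode the bounded-orbit condition as the common vanishing of two univariate polynomials in $\Z[T]$, and then invoke the resultant-divisibility result of G\'omez-P\'erez, Guti\'errez, Ibeas and Sevilla~\cite{GGIS}.

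The key observation is that if $\Orb_{\bfF_{\nu,t}}(\bfa_j\bmod p)$ has cardinality at most $L$, with preperiod $e$ and period $\pi$ so that $e+\pi\le L$, then $L\ge e$ forces the $L$-th iterate of $\bfa_j$ under $\bfF_{\nu,t}$ to lie on the periodic cycle, giving $\bfF_{\nu,t}^{(L)}(\bfa_j)\equiv \bfF_{\nu,t}^{(L-\pi)}(\bfa_j)\pmod p$. Accordingly, for $\nu\in\{1,\ldots,r\}$, $j\in\{1,\ldots,s\}$, $q\in\{1,\ldots,L\}$ and $i\in\{1,\ldots,m\}$ set
$$
P_{\nu,j,q,i}(T):=\bfF_\nu^{(L)}(\bfa_j,T)_i-\bfF_\nu^{(L-q)}(\bfa_j,T)_i\in\Z[T],
$$
so that any bad $t\in\ov\F_p$ satisfies $P_{\nu,j,\pi_{\nu,j},i}(t)\equiv 0\pmod p$ for every $i$ and $(\nu,j)$, for some $\pi_{\nu,j}\in\{1,\ldots,L\}$. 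Inductive estimates on the iterates give $\deg_T\bfF_\nu^{(L)}(\bfa_j,T)$ and $\h(\bfF_\nu^{(L)}(\bfa_j,T))\ll_{m,d,h}d^L$.

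For each $(\nu,j)$ I construct a scalar polynomial $Q_{\nu,j}(T)\in\Z[T]$, of degree and height of order $L\,d^L$, such that (i) $\bar{Q}_{\nu,j}(t)=0$ for every bad $t\in\ov\F_p$, and (ii) the complex zero set of $Q_{\nu,j}$ is precisely the set of $t\in\C$ for which $\bfa_j$ is preperiodic under $\bfF_{\nu,t}$ with orbit of length $\le L$. For $m=1$ the choice $Q_{\nu,j}:=\prod_{q=1}^L P_{\nu,j,q,1}$ works directly; for $m>1$ this is refined by combining suitable polynomials associated to the coordinates so as to faithfully record the vector condition $\bfP_{\nu,j,q}\equiv\bfzero$. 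Next I take two generic $\Z$-linear combinations $A=\sum_{\nu,j}\lambda_{\nu,j}Q_{\nu,j}$ and $B=\sum_{\nu,j}\mu_{\nu,j}Q_{\nu,j}$ in $\Z[T]$ with small integer coefficients, chosen so that $\gcd(A,B)=h:=\gcd_{\nu,j}Q_{\nu,j}$ in $\Q[T]$. Then every bad $t\in\ov\F_p$ is a common root of $\bar A$ and $\bar B$, while the complex zeros of $h$ coincide with the common complex zeros of the $Q_{\nu,j}$; by (ii) these are contained in the set~\eqref{eq:6}, so $h$ has at most $\kappa$ complex roots.

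To finish, write $A=h\cdot a$ and $B=h\cdot b$ with $\gcd(a,b)=1$ in $\Q[T]$. The result of~\cite{GGIS} bounds the number of common roots of $\bar a$ and $\bar b$ in $\ov\F_p$ by $\ord_p\Res(a,b)$. Since reduction modulo $p$ can only merge, never multiply, the distinct complex roots of $h$, the reduction $\bar h$ has at most $\kappa$ roots in $\ov\F_p$. Hence the bad set in $\ov\F_p$ has cardinality at most $\kappa+\ord_p\Res(a,b)$. Setting $\fA_L:=\Res(a,b)$, inflated by leading-coefficient factors to land in $\Z_{\ge 1}$, the Hadamard-type bound
$$
\log|\Res(a,b)|\le \deg(a)\,\h(b)+\deg(b)\,\h(a)+O\!\bigl((\deg a+\deg b)\log(\deg a+\deg b)\bigr),
$$
applied with $\deg A,\deg B,\h(A),\h(B)\ll_{m,d,h}L\,d^L$, yields $\log\fA_L\ll_{m,r,s,h}L^2d^{2L}$, as required.

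The main obstacle is the construction of $Q_{\nu,j}$ in the case $m>1$: a single scalar polynomial must faithfully encode the vector condition $\bfP_{\nu,j,q}\equiv\bfzero$ both modulo every prime $p$ (for property (i)) and over $\C$ (for property (ii)), while keeping its degree and height controlled at order $L\,d^L$, so that the final resultant estimate $\log\fA_L\ll L^2d^{2L}$ is preserved.
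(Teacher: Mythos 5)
Your overall architecture matches the paper's: reduce the bounded-orbit condition to common vanishing of univariate polynomials in $\Z[T]$, factor out a primitive gcd, and apply the resultant-divisibility bound of G\'omez-P\'erez, Guti\'errez, Ibeas and Sevilla to control the extra roots introduced by reduction modulo $p$. But the step you yourself single out as the obstacle is a genuine gap, and it does not close as stated. For $m>1$, the condition ``$\# \Orb_{\bfF_{\nu,t}}(\bfa_j)\le L$'' is a disjunction over $k\in\{0,\ldots,L-1\}$ of the conjunctions $\bigwedge_{i=1}^m\{F^{(L)}_{\nu,i}(\bfa_j,t)=F^{(k)}_{\nu,i}(\bfa_j,t)\}$, and no single polynomial of degree $\ll Ld^L$ has this as its exact complex zero locus. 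The naive $\prod_q\prod_i P_{\nu,j,q,i}$ vanishes whenever \emph{some} $(q,i)$ vanishes, a much larger set; a generic combination $\prod_q\bigl(\sum_i c_iP_{\nu,j,q,i}\bigr)$ still acquires spurious zeros where the linear form vanishes without the whole vector vanishing; and sums of squares or products over conjugates destroy the degree bound. Since your property (ii) is exactly what gives $\deg h\le\kappa$ and the containment of common roots in the set~\eqref{eq:6}, the argument collapses for $m>1$; only the case $m=1$, where your $Q_{\nu,j}=\prod_q P_{\nu,j,q,1}$ coincides with the paper's construction, goes through.

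The paper resolves this by dropping the requirement of a single polynomial per pair $(\nu,j)$ and instead introducing $m^L$ polynomials indexed by tuples $\bfi=(i_1,\ldots,i_L)\in\{1,\ldots,m\}^L$:
$$
\Psi_{\nu,\bfi,j}=\prod_{k=0}^{L-1}\Bigl(F^{(L)}_{\nu,i_{k+1}}(\bfa_j,T)-F^{(k)}_{\nu,i_{k+1}}(\bfa_j,T)\Bigr)\in\Z[T].
$$
Each $\Psi_{\nu,\bfi,j}$ still has degree and height $\ll Ld^L$, and the \emph{intersection} $\bigcap_{\bfi}\{\Psi_{\nu,\bfi,j}=0\}$ is exactly the set of bad $t$: if for every choice of one coordinate per step the product vanishes, then some single step $k_0$ must kill all $m$ coordinates, and conversely. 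The cost is $rsm^L$ rather than $rs$ polynomials, which the gcd/resultant machinery absorbs with only a logarithmic effect on heights. With these $\Psi_{\nu,\bfi,j}$ in hand, the rest of your plan is essentially the paper's: one forms a primitive gcd $H$, passes to the coprime quotients $\Phi_l=\Psi/H$, and rather than choosing two integer linear combinations $A,B$ with small coefficients (whose existence you would still need to justify) the paper takes $\Phi_0$ together with the formal combination $\Phi=\sum_l U_l\Phi_l$ over $\Z[\bfU]$, sets $R=\Res(\Phi_0,\Phi)$, and lets $\fA_L$ be any nonzero coefficient of $R$; the count $\kappa+\ord_p\fA_L$ and the Hadamard-type bound $\log\fA_L\ll_{m,r,s,h}L^2d^{2L}$ then follow as you anticipate.
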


Theorem~\ref{thm:Cycl 2 Poly Univ}  contains Theorem~\ref{thm:1} for systems depending on a
single parameter, with a better control for the integer $\fA_{L}$:
this latter result corresponds to the primes $p$ such that
$\ord_{p}\,\fA_{L} =0$. 

As a consequence of Theorem~\ref{thm:Cycl 2 Poly Univ}, we obtain the following result
valid for all primes, and which  is a sharper version of
Corollary~\ref{cor:2} for the case of $n=1$ parameter. 

\begin{cor}
  \label{cor:4} 
  With conditions as in Theorem~\ref{thm:Cycl 2 Poly Univ}, for any
  $0<\varepsilon< \frac{1}{2\log d}$, there exists $0<\gamma<1$ such
  that, for every $Q\ge 2$ and every prime $p\le Q$, the number of
  values  of $t \in \ov \F_p$ such that 
$$
\max_{\substack{1\le \nu \le r \\ 1\le j \le s}}
\#\Orb_{\bfF_{\nu,t}}(\bfa_{j} \bmod p) \le  \varepsilon \log p
$$
is bounded by    $\kappa+ c_{p}$, with 
$$
\sum_{p\le Q} c_{p}\ll_{m,n,r,s,h} Q^{\gamma}.
$$ 
\end{cor}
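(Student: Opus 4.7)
The plan is to derive Corollary~\ref{cor:4} from a single application of Theorem~\ref{thm:Cycl 2 Poly Univ}, with a value of $L$ chosen uniformly in the primes $p\le Q$ rather than prime by prime. Fix $Q\ge 2$, set $L=\max\{1,\lfloor \varepsilon \log Q\rfloor\}$, and apply Theorem~\ref{thm:Cycl 2 Poly Univ} with this $L$. This produces an integer $\fA_{L}\ge 1$ with $\log\fA_{L}\ll_{m,r,s,h} L^{2}d^{2L}$ such that, for every prime $p$, the set of $t\in\ov\F_{p}$ with $\max_{\nu,j}\#\Orb_{\bfF_{\nu,t}}(\bfa_{j}\bmod p)\le L$ has cardinality at most $\kappa+\ord_{p}\fA_{L}$. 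For each $p\le Q$ we have $\varepsilon\log p\le\varepsilon\log Q\le L$, so the bad set in Corollary~\ref{cor:4} is contained in this set, and we may take $c_{p}=\ord_{p}\fA_{L}$.

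Next I would sum $c_{p}$ over $p\le Q$. From $\fA_{L}=\prod_{p}p^{\ord_{p}\fA_{L}}$ and the fact that every prime factor is at least~$2$, one obtains
$$
\sum_{p\le Q} c_{p} \;\le\; \sum_{p}\ord_{p}\fA_{L} \;\le\; \frac{\log \fA_{L}}{\log 2} \;\ll_{m,r,s,h}\; L^{2}\, d^{2L}.
$$
Using $L\le 1+\varepsilon\log Q$ yields $d^{2L}\ll_{\varepsilon}Q^{2\varepsilon\log d}$, and the hypothesis $\varepsilon<\frac{1}{2\log d}$ gives $\eta:=2\varepsilon\log d<1$. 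Thus the right-hand side is bounded by $(1+\log Q)^{2}Q^{\eta}\ll_{\varepsilon}Q^{\gamma}$ for any fixed $\gamma\in(\eta,1)$, which is the required estimate.

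The conceptual point, and what I expect to be the only real obstacle, is the choice of a \emph{single} $L$ valid for every $p\le Q$. The natural prime-by-prime choice $L_{p}=\lfloor\varepsilon\log p\rfloor$, combined with the crude estimate $\ord_{p}\fA_{L_{p}}\le \log\fA_{L_{p}}/\log p$, only yields a bound of the shape $Q^{1+\eta}$ for $\sum c_{p}$, which is worse than trivial. Working with a uniform $L$ lets us invoke the identity $\sum_{p}\ord_{p}(\fA_{L})\log p=\log\fA_{L}$, so the factor $\log p$ is absorbed globally rather than lost prime by prime; this saving is precisely what allows $\gamma$ to be taken strictly below $1$.
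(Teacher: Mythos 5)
Your proof is correct and follows essentially the same route as the paper: both apply Theorem~\ref{thm:Cycl 2 Poly Univ} once with the uniform choice $L=\lfloor\varepsilon\log Q\rfloor$, set $c_{p}=\ord_{p}\fA_{L}$, and conclude from $\sum_{p}\ord_{p}\fA_{L}\le\log\fA_{L}/\log 2\ll L^{2}d^{2L}\ll Q^{2\varepsilon\log d}(\log Q)^{2}$. (One small slip: you write $\varepsilon\log Q\le L$, whereas in fact $L\le\varepsilon\log Q$; the containment of bad sets still holds because orbit lengths are integers, so $\#\Orb\le\varepsilon\log p$ is equivalent to $\#\Orb\le\lfloor\varepsilon\log p\rfloor\le L$.)
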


\section{Preliminaries}\label{sec:preliminaries}

In this section, we gather some bounds on the heights and degrees of
some polynomials. We also need some rather general statements about
the reduction modulo primes of systems of multivariate polynomials
and about the divisibility of resultants.

We start with  bounds for the height of sums and products of
polynomials, whose proof can be derived from~\cite[Lemma~1.2]{KPS}. 

\begin{lemma}
\label{lem:HeightProd}
Let $G_{i}\in \Z[T_{1},\ldots, T_{n}]$, $i=1,\ldots, s$. Then 
\begin{enumerate}
\item  \label{item:1} $\displaystyle{\h \(\sum_{i=1}^s G_{i}\) \le \max_{1\le
    i\le s} \h(G_i) +\log s;}
   $
    
\item\label{item:2} $  \displaystyle{- 2  \log(n+1) \sum_{i=1}^s\deg G_{i} \le \h \(\prod_{i=1}^s G_{i}\)
 -\sum_{i=1}^s \h\( G_{i}\)
 \le  \log(n+1) \sum_{i=1}^s\deg G_{i}  . }$
\end{enumerate}
\end{lemma}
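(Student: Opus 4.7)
My plan is to handle the two parts of the lemma separately, each by a direct calculation relying on \cite[Lemma~1.2]{KPS} for the underlying Gelfond-type estimates on heights of products.

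For part~(i), I would apply the triangle inequality coefficientwise. Every coefficient of $\sum_{i=1}^s G_i$ is a sum of at most $s$ corresponding coefficients of the $G_i$, each of absolute value at most $\exp(\max_i\h(G_i))$. Hence each coefficient of the sum is bounded in absolute value by $s\exp(\max_i\h(G_i))$, and taking logarithms immediately yields the stated bound.

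For part~(ii), the key input is the Gelfond--Mahler-type inequality for two polynomials contained in \cite[Lemma~1.2]{KPS}: for $F,G\in\Z[T_1,\ldots,T_n]$, one has $|\h(FG) - \h(F) - \h(G)| \le \log(n+1)(\deg F + \deg G)$, up to the small multiplicative discrepancy reflected in the asymmetric constants $1$ vs.\ $2$ of the statement. The underlying fact is the multiplicativity $M(FG) = M(F)M(G)$ of the Mahler measure together with the comparison $M(P)\le H(P)\le (n+1)^{\deg P} M(P)$ between the Mahler measure and the naive height $H(P) = \exp(\h(P))$ of a polynomial in $n$ variables. Chaining these for an arbitrary number of factors gives
$$
H\Bigl(\prod_{i=1}^s G_i\Bigr) \le (n+1)^{\sum_i \deg G_i}\, M\Bigl(\prod_{i=1}^s G_i\Bigr) = (n+1)^{\sum_i \deg G_i}\prod_{i=1}^s M(G_i) \le (n+1)^{\sum_i \deg G_i}\prod_{i=1}^s H(G_i),
$$
and the reverse chain gives the lower inequality $\prod_i H(G_i) \le (n+1)^{\sum_i\deg G_i}\, H(\prod_i G_i)$. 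Taking logarithms converts both of these into the claimed height bounds, with the factor $2$ on the lower side absorbing any small loss incurred in matching the precise formulation of the cited lemma.

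I do not foresee any substantial obstacle. The only point to check carefully is that $\deg(\prod_i G_i)\le \sum_i\deg G_i$ when pulling the $(n+1)^{\deg(\cdot)}$ factor across the product, but this is immediate. All remaining steps reduce to elementary manipulations of the two-polynomial inequality, and the bookkeeping needed to track the exact constants in the statement is routine.
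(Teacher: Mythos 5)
Your argument for part~(i) is correct and is the standard one.

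For part~(ii), the strategy --- multiplicativity of the Mahler measure combined with comparisons between Mahler measure and naive height --- is indeed the circle of ideas behind the cited~\cite[Lemma~1.2]{KPS}, but there are genuine problems with the specific inequalities you invoke. The claim $M(P)\le H(P)$ is simply false: for $P=1+x_1+x_2$ one has $H(P)=1$, while $m(P)=L'(\chi_{-3},-1)\approx 0.323$, so $M(P)\approx 1.38>1$. In general the best one can say is $M(P)\le\|P\|_2\le\sqrt{N(P)}\,H(P)$, which costs an extra factor of roughly $(n+1)^{\deg P/2}$. Inserting the corrected bound into your chain then destroys the constant: your display for the upper inequality becomes $(n+1)^{\frac{3}{2}\sum\deg G_i}\prod H(G_i)$ rather than $(n+1)^{\sum\deg G_i}\prod H(G_i)$, which overshoots the stated coefficient~$1$. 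The clean route to the upper bound does not use the Mahler measure at all: use $\|PQ\|_1\le\|P\|_1\|Q\|_1$ together with $H(P)\le\|P\|_1\le N(P)\,H(P)$ and $N(P)\le\binom{n+\deg P}{n}\le(n+1)^{\deg P}$, which yields exactly $\h(\prod G_i)\le\sum\h(G_i)+\log(n+1)\sum\deg G_i$.

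For the lower bound the Mahler measure is the right tool, but the second inequality you state, $H(P)\le(n+1)^{\deg P}M(P)$ with $\deg P$ the total degree, is not a standard consequence of Mahler's coefficient bound. The classical estimate bounds $\|P\|_1$ by $2^{\sum_j\deg_{x_j}P}\,M(P)$, and for a polynomial of total degree $d$ in $n$ variables the sum $\sum_j\deg_{x_j}P$ can be as large as $nd$ (e.g.\ $P=\sum_j x_j^d$), giving a factor $2^{nd}$ that is larger than $(n+1)^{2d}$ once $n\ge 6$. So the ``small loss'' you wave away is not automatically absorbed by the factor $2$, and the last step of your ``reverse chain'' also relies on the false $M\le H$. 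In short: part~(i) is fine, your upper bound should be redone via $\ell^1$ submultiplicativity, and the lower bound needs a sharper multivariate Mahler comparison that is genuinely part of the content of~\cite[Lemma~1.2]{KPS} and cannot be dismissed as routine bookkeeping. (For comparison, the paper itself does not spell out a proof of this lemma; it only cites~\cite[Lemma~1.2]{KPS}, so there is no detailed argument in the source to measure yours against beyond that reference.)
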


We also need the upper bound from~\cite[Lemma~3.4]{DOSS} for the degree
and the height of iterations of polynomial dynamical systems.

\begin{lemma}
\label{lem:HeighIter-Poly}
Let $G_{i}\in \Z[T_{1},\ldots, T_{n}]$, $i=1,\ldots, n$, be polynomials
of degree at most $d\geq2$ and height at most $h$.  Set
$\bfG=(G_{1},\ldots, G_{n})$ and, for $k\ge0$, let $\bfG^{(k)}$
denote the $k$-th iterate of $\bfG$. Then
$$
\deg \bfG^{(k)} \le  d^k \mand \h \(\bfG^{(k)}\) \le h \frac{d^k-1}{d-1}+d(d+1)\frac{d^{k-1}-1}{d-1}\log(n+1). 
$$
\end{lemma}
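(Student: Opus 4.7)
The plan is to apply Theorem~\ref{thm:Cycl 2 Poly Univ} for each prime $p\le Q$ with a value of $L$ tailored to $p$, and then control the sum of local contributions $\ord_p\fA_{L}$ by grouping primes according to their choice of $L$.

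For each prime $p\ge e^{1/\varepsilon}$, I would set $L(p)=\lfloor\varepsilon\log p\rfloor\ge 1$. Because every orbit has cardinality at least $1$, the integer-valued condition $\max_{\nu,j}\#\Orb_{\bfF_{\nu,t}}(\bfa_{j}\bmod p)\le \varepsilon\log p$ is equivalent to the same inequality with $\varepsilon\log p$ replaced by $L(p)$. Theorem~\ref{thm:Cycl 2 Poly Univ} applied with $L=L(p)$ then bounds the count of exceptional $t\in\ov{\F}_{p}$ by $\kappa+\ord_{p}\fA_{L(p)}$, where $\fA_{\ell}\ge 1$ is an integer with $\log\fA_{\ell}\ll_{m,r,s,h}\ell^{2}d^{2\ell}$. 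For $p<e^{1/\varepsilon}$ the orbit condition is vacuous, so I set $c_{p}=0$; otherwise I take $c_{p}=\ord_{p}\fA_{L(p)}$, a quantity depending only on $p$ (not on $Q$), as required.

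I would then partition the remaining primes into classes $\cP_{L}=\{p\le Q~:~L(p)=L\}$ for $L\ge 1$. Every $p\in\cP_{L}$ satisfies $\log p\ge L/\varepsilon$, so the product identity $\log\fA_{L}=\sum_{p}\ord_{p}(\fA_{L})\log p$ (summing over all primes) yields
$$
\sum_{p\in\cP_{L}}\ord_{p}\fA_{L}\le \frac{\varepsilon}{L}\log\fA_{L}\ll_{m,r,s,h}\varepsilon L d^{2L}.
$$
Summing this geometric-type series over $L=1,\ldots,\lfloor\varepsilon\log Q\rfloor$, which is dominated by its last term since the ratio of consecutive terms is $\asymp d^{2}>1$, I would obtain
$$
\sum_{p\le Q}c_{p}\ll_{m,r,s,h,\varepsilon,d}(\log Q)\,Q^{2\varepsilon\log d}.
$$

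Finally, the hypothesis $\varepsilon<\frac{1}{2\log d}$ forces $2\varepsilon\log d<1$, so any $\gamma\in(2\varepsilon\log d,1)$ absorbs the $\log Q$ factor into $Q^{\gamma-2\varepsilon\log d}$ for every $Q\ge 2$, giving the stated bound $\sum_{p\le Q}c_{p}\ll Q^{\gamma}$. The principal subtlety is precisely the grouping argument: naively bounding $\ord_{p}\fA_{L(p)}\le \log\fA_{L(p)}/\log p$ prime-by-prime would yield a total of order $(\log Q)\,Q^{1+2\varepsilon\log d}$, which is useless. The saving comes from noticing that all primes in a single class $\cP_{L}$ are at least $e^{L/\varepsilon}$, so the product formula applied to a single $\fA_{L}$ distributes its total logarithm efficiently across an entire class of these large primes, and only then does the bound $\log \fA_{L}\ll L^{2}d^{2L}$ become fully effective.
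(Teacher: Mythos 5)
Your proposal does not prove the stated result at all. The statement you were asked to prove is Lemma~\ref{lem:HeighIter-Poly}, a purely algebraic estimate bounding $\deg\bfG^{(k)}$ and $\h(\bfG^{(k)})$ for iterates of a system of integer polynomials; its proof would proceed by induction on $k$, composing the degree bound $\deg(G_i\circ\bfG^{(k-1)})\le d\cdot\deg\bfG^{(k-1)}$ with a height estimate for substituting polynomials into a polynomial of degree $d$ and height $h$ (the paper cites~\cite[Lemma~3.4]{DOSS}, which rests on~\cite[Lemma~1.2]{KPS}). Nothing in that lemma involves primes, orbits, resultants, or Theorem~\ref{thm:Cycl 2 Poly Univ}.

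What you have instead written is an argument for Corollary~\ref{cor:4}: you choose $L(p)=\lfloor\varepsilon\log p\rfloor$, invoke Theorem~\ref{thm:Cycl 2 Poly Univ}, partition primes $p\le Q$ into classes by the value of $L(p)$, and control $\sum_{p\le Q}\ord_p\fA_{L(p)}$ via the product formula $\log\fA_L=\sum_p\ord_p(\fA_L)\log p$. As a sketch of Corollary~\ref{cor:4} that argument is reasonable and in fact somewhat more refined than the paper's own two-line proof of that corollary (the paper fixes a single $L=\lfloor\varepsilon\log Q\rfloor$ for all $p\le Q$ and observes $\sum_{p\le Q}\ord_p\fA_L\le\log\fA_L/\log 2\ll(\log Q)^2Q^{2\varepsilon\log d}$; your per-prime choice of $L$ gives $c_p$ genuinely independent of $Q$, which the corollary's phrasing does require). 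But none of this has any bearing on the lemma you were asked to prove, so the proposal must be counted as a complete miss for the target statement.
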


Crucial to our strategy is the following result on the reduction
modulo primes of systems of multivariate polynomials over the
integers, whose proof relies on the arithmetic Nullstenllensatz
from~\cite{DKS}.

\begin{theorem}[{\cite[Theorem~2.1]{DOSS}}] 
\label{thm:2}
  Let $G_{i}\in \Z[T_{1},\ldots, T_{n}]$, $i=1,\ldots, s$, be $n\ge 1$ polynomials
  of degree at most $d\geq2$ and height at most $h$, whose zero set
  in $\C^{n}$ has a finite number $\kappa$ of distinct points.  Then
  there is an integer $\fA\ge 1$ with
$$
\log \fA \le (11n + 4 ) d^{3n +1} h +(55 n+99) \log ((2n+5)s) d^{3n
  +2}
$$
such that, if $p$ is a prime not dividing $\fA$, then the zero set in
$\ov \F_{p}^{n}$ of the polynomials $G_i\bmod{p}$, $i=1,\ldots, s$,
consists of exactly $\kappa$ distinct points.
\end{theorem}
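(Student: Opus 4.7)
The plan is to produce an algebraic description of the zero set $V=\{\xi_{1},\ldots,\xi_{\kappa}\}\subset \C^{n}$ of the system $G_{1}=\cdots=G_{s}=0$ via a generic linear projection to $\mathbb{A}^{1}$ and an eliminant polynomial, and then to define $\fA$ as the product of the denominators, leading coefficients, and the discriminant arising in this description. The core technical input will be the arithmetic Nullstellensatz of~\cite{DKS}, which controls the height of polynomials belonging to an ideal generated by $G_{1},\ldots,G_{s}$ together with auxiliary linear equations.

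First, I would choose a linear form $\ell(\bfT)=c_{1}T_{1}+\cdots+c_{n}T_{n}$ with integer coefficients $c_{j}$ of size $\ll \kappa^{2}$ that \emph{separates} the points of $V$, i.e., such that $\ell(\xi_{1}),\ldots, \ell(\xi_{\kappa})\in \C$ are pairwise distinct. Existence follows by a standard counting argument: for each pair $i\ne j$ the tuples $(c_{1},\ldots,c_{n})$ with $\ell(\xi_{i})=\ell(\xi_{j})$ lie in a proper linear subspace of $\C^{n}$, and a union bound over the at most $\binom{\kappa}{2}$ pairs provides integer tuples in a small box avoiding all of them. Bezout's theorem gives the crude estimate $\kappa\le d^{n}$, keeping the cost of $\ell$ under control.

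Next I would consider the eliminant $P(Y)=\prod_{i=1}^{\kappa}(Y-\ell(\xi_{i}))$, which is squarefree of degree $\kappa$ in $\Q[Y]$. After clearing a denominator $a\in \Z_{>0}$, the polynomial $aP(Y)$ lies in the integral ideal generated by $G_{1}(\bfT),\ldots,G_{s}(\bfT)$ and $Y-\ell(\bfT)$ in $\Z[Y,\bfT]$; the arithmetic Nullstellensatz of~\cite{DKS} then bounds $\log|a|$ and $\h(aP)$ by $O(nd^{3n+1}h+n\log((2n+5)s)d^{3n+2})$. Similarly, for each coordinate $1\le j\le n$, a Shape-Lemma-style construction produces polynomials $R_{j}(Y)\in \Z[Y]$ with $\deg R_{j}<\kappa$ and nonzero integers $b_{j}$ such that $b_{j}\,\xi_{i,j}=R_{j}(\ell(\xi_{i}))$ for all $i$, with analogous height bounds. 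A further application of the Nullstellensatz expresses an integer multiple $D\,G_{\nu}(\bfT)$ as an element of the ideal generated by $P(\ell(\bfT))$ and $b_{j}T_{j}-R_{j}(\ell(\bfT))$ in $\Z[\bfT]$, again with the same order of magnitude in the height.

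I would then define
$$
\fA \;=\; a\cdot D\cdot \mathrm{lc}(P)\cdot \mathrm{disc}(P)\cdot \prod_{j=1}^{n}b_{j}.
$$
For any prime $p\nmid \fA$, the reduction $\ov P = P\bmod p$ remains squarefree of degree $\kappa$, hence has exactly $\kappa$ distinct roots $y_{1},\ldots,y_{\kappa}\in \ov\F_{p}$; the formulas $\xi_{i,j}=R_{j}(y_{i})/b_{j}$ yield $\kappa$ distinct candidate points in $\ov\F_{p}^{\,n}$; and the identities for $D\,G_{\nu}$ show that these are exactly the zeros of $G_{1}\bmod p,\ldots,G_{s}\bmod p$. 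Summing the height contributions using $\h(\mathrm{disc}(P))\le (2\kappa-1)\h(P)+O(\kappa\log \kappa)$ and $\kappa\le d^{n}$ yields the stated bound on $\log \fA$. The main obstacle is to keep the numerical constants as tight as $11n+4$ and $55n+99$: this is not a one-shot invocation of~\cite{DKS} but requires careful bookkeeping across the auxiliary objects $a,P,R_{j},b_{j},D$, exploiting the explicit form of the arithmetic Nullstellensatz and distinguishing the degree in $Y$ from the degrees in $\bfT$, so that the exponent on $d$ stays at $3n+1$ and $3n+2$ rather than the looser $3(n+1)+1$ one would get from treating $(\bfT,Y)$ as $n+1$ variables throughout.
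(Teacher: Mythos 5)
The theorem in question is not proved in this paper: it is quoted verbatim from D'Andrea--Ostafe--Shparlinski--Sombra~\cite[Theorem~2.1]{DOSS}, so there is no internal argument to compare against. On its own terms, your eliminant and shape-lemma strategy via the arithmetic Nullstellensatz of~\cite{DKS} is the right general framework, but the proposal has a genuine gap.

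The hypothesis concerns the \emph{zero set}, not the ideal, and the ideal $I=(G_1,\ldots,G_s)$ need not be radical. Your claim that ``$aP(Y)$ lies in the integral ideal generated by $G_1(\bfT),\ldots,G_s(\bfT)$ and $Y-\ell(\bfT)$'' already fails for $n=1$, $s=1$, $G_1=T_1^2$, $\ell=T_1$, $\kappa=1$, $P(Y)=Y$: one checks directly that $Y\notin(T_1^2,\,Y-T_1)$, whereas $Y^2=T_1^2+(Y+T_1)(Y-T_1)$ does lie in the ideal. In general the arithmetic Nullstellensatz only yields $a\,P(Y)^e\in(G_1,\ldots,G_s,Y-\ell)_{\Z[Y,\bfT]}$ for some exponent $e$ (with controlled $a$ and $e$), and the same caveat applies to the coordinate relations $b_jT_j-R_j(\ell(\bfT))$. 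More importantly, your final step ``the identities for $D\,G_\nu$ show that these are \emph{exactly} the zeros'' establishes only one inclusion, namely that the $\kappa$ constructed points are zeros of the $G_\nu\bmod p$. The reverse inclusion --- that there are no further common zeros of $G_1,\ldots,G_s$ in $\ov\F_p^{\,n}$, and in particular no positive-dimensional component --- requires the opposite Nullstellensatz membership: integer multiples of suitable powers of $P(\ell(\bfT))$ and of $b_jT_j-R_j(\ell(\bfT))$ must be exhibited inside $(G_1,\ldots,G_s)_{\Z[\bfT]}$, and the corresponding denominators must be folded into $\fA$. This direction is not addressed at all in the proposal. Both defects can be repaired by invoking the strong form of the effective arithmetic Nullstellensatz and tracking the resulting exponents, but as written the argument does not reach the stated conclusion; it also silently assumes $\kappa\ge1$, whereas the case $\kappa=0$ needs the weak Nullstellensatz separately.
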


Given two univariate polynomials $F_{1}, F_{2} \in \Z[T]$, if
their reductions $F_{i} \bmod{p}$, $i=1,2$, have a common zero in
$\ov\F_{p}$, then their resultant $\Res(F_{1},F_{2})$ is divisible by
$p$.  The following result refines this property for polynomials
whose reduction modulo $p$ has several common roots.
%
\begin{theorem}[{\cite{GGIS}}] 
 \label{thm:3}
  Let $A$ be a unique factorization domain with field of fractions
  $K$, $p\in A$ an irreducible element, and $F_{1}, F_{2} \in A[T]$
  two univariate polynomials whose reductions modulo $p$ do not vanish
  identically and have at least $N$ common roots in $\ov K$, counted
  with multiplicities.  Then $p^N \mid \Res(F_{1},F_{2})$.
\end{theorem}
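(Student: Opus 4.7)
The plan is to represent $\Res(F_1, F_2)$ as the determinant of the Sylvester matrix and then exploit the theory of elementary divisors over a discrete valuation ring. Since $A$ is a UFD and $p$ is irreducible, the localization $A_{(p)}$ is a DVR with uniformizer $p$, and an element of $A$ is divisible by $p^N$ in $A$ if and only if it is in $A_{(p)}$; one may therefore assume that $A$ is itself a DVR with residue field $k=A/(p)$ and fraction field $K$. Set $d_i = \deg F_i$ (the $K[T]$-degree, so that the leading coefficient of $F_i$ lies in $A \setminus \{0\}$), and let $M \in A^{(d_1+d_2)\times(d_1+d_2)}$ denote the Sylvester matrix of $F_1, F_2$, so that $\Res(F_1, F_2) = \det M$.

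The crux is to bound from above the rank over $k$ of the reduction $\ov{M}$ modulo $p$ by producing a large kernel. The matrix $\ov{M}$ represents the $k$-linear map $\phi \colon k[T]_{<d_2} \oplus k[T]_{<d_1} \to k[T]_{<d_1+d_2}$ sending $(U,V)$ to $U\ov{F}_1 + V\ov{F}_2$, where $\ov{F}_i \in k[T]$ is the image of $F_i$, of actual degree at most $d_i$. Let $H = \gcd(\ov{F}_1, \ov{F}_2) \in k[T]$ and write $\ov{F}_i = H \, G_i$ with $G_i \in k[T]$. The number of common roots of $\ov{F}_1$ and $\ov{F}_2$ in an algebraic closure $\ov{k}$, counted with multiplicity, is exactly $\deg H$, so the hypothesis yields $\deg H \geq N$. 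For any $W \in k[T]$ with $\deg W < N$, the pair $(W G_2, -W G_1)$ annihilates $\phi$ and satisfies $\deg(W G_i) \leq (N-1) + (d_i - \deg H) \leq d_i - 1$, so it lies in the domain of $\phi$. Since $G_1, G_2 \ne 0$, the assignment $W \mapsto (W G_2, -W G_1)$ is injective, producing an $N$-dimensional subspace of $\ker \phi$; hence $\rank \ov{M} \leq d_1 + d_2 - N$.

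To conclude, apply the Smith normal form over the DVR $A$: there exist $P, Q \in \GL_{d_1+d_2}(A)$ and integers $0 \leq e_1 \leq \cdots \leq e_{d_1+d_2}$ with $PMQ = \mathrm{diag}(p^{e_1}, \ldots, p^{e_{d_1+d_2}})$. Reduction modulo $p$ identifies $\rank \ov{M}$ with $\#\{i : e_i = 0\}$, and the bound just proved forces $e_i \geq 1$ for at least $N$ indices. Hence the $p$-adic valuation of $\det M$ is $e_1 + \cdots + e_{d_1+d_2} \geq N$, i.e., $p^N \mid \Res(F_1, F_2)$. The main subtlety to monitor is that $\ov{F}_i$ can have degree strictly less than $d_i$ when the leading coefficient of $F_i$ lies in $(p)$; this compels the Sylvester matrix to be read at its formal size $d_1 + d_2$ rather than at the reduced sizes, but the kernel construction above remains robust in that regime, as the estimate $\deg(W G_i) \leq d_i - 1$ makes explicit.
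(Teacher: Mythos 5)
The paper states this theorem as a citation to \cite{GGIS} and gives no proof of its own, remarking only that the weaker statement from \cite[Lemma~5.3]{KS99} would already suffice for its application. There is therefore no in-paper proof to compare against, and your argument has to be judged on its own.

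It is correct as a self-contained proof, and it is the natural one: the rank of the Sylvester matrix modulo $p$ controls the $p$-adic valuation of the resultant via the Smith normal form over the DVR $A_{(p)}$. The kernel construction is sound: with $H=\gcd(\ov F_1,\ov F_2)$ and $\ov F_i = H G_i$, each $W$ of degree $<N$ gives $(WG_2,-WG_1)\in\ker\phi$, and the degree bookkeeping $\deg(WG_i)\le (N-1)+(\deg\ov F_i-\deg H)\le d_i-1$ is right even when $\ov F_i$ drops degree, since $\deg\ov F_i\le d_i$ and $\deg H\ge N$; injectivity of $W\mapsto(WG_2,-WG_1)$ follows from $G_2\ne 0$, so $\rank_k\ov M\le d_1+d_2-N$ and at least $N$ of the Smith exponents are positive. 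Three small points to tidy up. First, the phrase ``an element of $A$ is divisible by $p^N$ in $A$ if and only if it is in $A_{(p)}$'' is a slip; you mean that $p^N\mid a$ in $A$ iff $p^N\mid a$ in $A_{(p)}$, which holds because $A$ is a UFD so the $p$-adic valuation is well defined and unchanged by localizing at $(p)$. Second, if $\Res(F_1,F_2)=0$ the Smith normal form has zero diagonal entries and cannot be written as $\mathrm{diag}(p^{e_1},\ldots)$ with finite $e_i$; dispatch that case first as trivial, since $p^N\mid 0$. Third, after localizing the residue field is $A_{(p)}/pA_{(p)}=\mathrm{Frac}(A/(p))$, not $A/(p)$ itself, and the common roots live in an algebraic closure of that field --- which is clearly what the theorem's statement intends despite its notation $\ov K$. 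None of these affects the validity of your argument.
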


Indeed, for our application it is sufficient to use the result
of~\cite[Lemma~5.3]{KS99} taking only into account the number of
different roots of the reductions of the polynomials $F_{i}$ modulo $p$.

\section{Proofs of the main results}

In this section, we prove the results stated in
\S\ref{sec:stat-main-results}. We start with Theorem~\ref{thm:1} and
its consequences.
 
\begin{proof}[Proof of Theorem~\ref{thm:1}]
  Fix $1\le \nu\le r$ and $1\le j\le s$. Given $0\le k\le L-1$, a
  point $\bft\in \C^{n}$ verifies that
  $$
    \bfF_{\nu}^{(L)}(\bfa_{j}, \bft)=\bfF_{\nu}^{(k)}(\bfa_{j},\bft)
  $$
  if and only if it lies in the zero set of the ideal
$$
I_{\nu,j,k}=\(F_{\nu,i}^{(L)}(\bfa_{j}, \bfT)-F_{\nu,i}^{(k)}(\bfa_{j},
\bfT)~:~1\le i\le m \) \subseteq \Z[\bfT].
$$
Hence, 
\begin{math}
\#  \Orb_{\bfF_{\nu},\bft}(\bfa_{j}) \le L
\end{math}
if and only if $\bft$ lies in the zero set of the ideal
$\prod_{k=0}^{L-1} I_{\nu,j,k}$.

For each $\nu=1,\ldots, r$, $\bfi\in \{1,\ldots, m\}^{L}$  and $j=1,\ldots,
s$, consider the polynomial
$$
\Psi_{\nu,\bfi,j}=\prod_{k=0}^{L-1}\(F_{\nu,i_{k+1}}^{(L)}(\bfa_{j}, \bfT) -
F_{\nu,i_{k+1}}^{(k)}(\bfa_{j}, \bfT)\) \in \Z[\bfT].
$$
This gives a set of $r s m^{L}$ generators of the ideal
$$
\sum_{\nu=1}^r \sum_{j=1}^s \prod_{k=0}^{L-1} I_{\nu,j,k} \subseteq \Z[\bfT].
$$  Hence,
for a point $\bft\in \C^{n}$, 
\begin{equation}
\label{eq:4}
\max_{\substack{1\le \nu \le r \\ 1\le j \le s}}\# \Orb_{\bfF_{\nu},\bft}(\bfa_{j}) \le L
\end{equation}
if and only if $\Psi_{\nu,\bfi,j}(\bft)=0$ for all $\nu$, $\bfi$ and
$j$.  Moreover, the set of such parameters  $\bft$ is contained in the set
of $\bft\in \C^{n}$ such that $\bfa_{j}\in \PreP_\C(\bfF_{\nu,\bft})$
for all $\nu$ and $j$.  By hypothesis, this latter set is empty if
$n=0$, and finite if $n\ge 1$.  Hence, the number of possible values of $\bft$'s
satisfying~\eqref{eq:4} is finite and bounded above by the constant
$\kappa$.

For $\nu=1,\ldots, r$, consider the family of $m+n$ polynomials in
$m+n$ variables given by 
$$
\bfR_\nu  =(\bfF_\nu , \bfT) \in \Z[\bfX,\bfT]^{m+n}. 
$$
For $k\ge 0$, we have that
$ \bfR_{\nu}^{(k)}=(\bfF_{\nu}^{(k)}, \bfT)$. Hence, the $k$-th
iteration of the system $\bfF_{\nu}$ with respect to the variables
$\bfX$ can be recovered from the first $m$ coordinate polynomials of
the $k$-th iteration of the system $\bfR_{\nu}$.  Applying
Lemma~\ref{lem:HeighIter-Poly} to $\bfR_\nu $, we deduce that
$\deg \bfF_{\nu}^{(k)}\le d^k$ and
$$
\h(\bfF_{\nu}^{(k)})\le h
\frac{d^k-1}{d-1}+d(d+1)\frac{d^{k-1}-1}{d-1}\log(m+n+1)
\ll_{m,n,r,s,h} d^{k}.
$$
By Lemma~\ref{lem:HeightProd}, for all $\nu,\bfi,j$,
\begin{equation}
  \label{eq:1}
  \deg \Psi_{\nu,\bfi,j} \le Ld^{L} \mand   \h(\Psi_{\nu,\bfi,j}) \ll_{m,n,r,s,h} Ld^{L}.
\end{equation}

When $n=0$, the polynomials $\Psi_{\nu,\bfi,j}$ are constant. As in Remark~\ref{rem:1}, our hypothesis that
there is no $\bft\in \C^{0} = \{0\}$ satisfying~\eqref{eq:4} implies  that there exist
  $\nu_{0}$ and $j_{0}$ such that
$   \bfa_{j_{0}}\notin  \PreP_\C(\bfF_{\nu_{0},0})$, and thus $\Psi_{\nu_0,\bfi,j_0}\ne 0$ for all $\bfi$.
In this case we take $\fA_{L}=\gcd\{\Psi_{\nu_0,\bfi,j_0}~:~\bfi\in \{1,\ldots, m\}^{L}\}$.

When $n\ge 1$, we set $\fA_{L}$ for the positive 
integer given by Theorem~\ref{thm:2} applied to this family of
polynomials, which satisfies
\begin{align*}
\log \fA_{L}  & \le (11n  + 4 )(Ld^{L})^{3n +1} Ld^{L} + (55 n+99) \log \((2n+5) (r s
              m^{L})\)(Ld^{L})^{3n +2} \\
& \ll_{m,n,r,s,h}  L^{3n+3}d^{(3n+2)L}.
\end{align*}
In both cases,  for every prime $p\nmid \fA_{L}$, the system of equations
$$
  \Psi_{\nu,\bfi,j} (\bfa_{j} \bmod p, \bft) = 0
$$
has at most $\kappa$ solutions $\bft\in \ov \F_p^n$.
Similarly as before, 
this is equivalent to the statement that 
$$
\max_{\substack{1\le \nu \le r \\ 1\le j \le s}}\# \Orb_{\bfF_{\nu,\bft}}(\bfa_{j} \bmod p) > L, 
$$
for all but at most $\kappa$ values of  $\bft\in \ov \F_p^n$, which
proves the theorem. 
\end{proof}

\begin{proof}[Proof of Corollary~\ref{cor:1}]
Theorem~\ref{thm:1}
  applied with
$ 
 L=\fl{\varepsilon \, \log\log p}
$ 
implies there is a positive integer $\fA_{L}$ with 
\begin{equation}
  \label{eq:7}
  \log\fA_{L} \ll_{m,n,r,s,h} 
  \begin{cases}
( \varepsilon \, \log \log p)
  (\log p)^{\varepsilon \, \log d} & \text{ if } n=0,\\
 (\varepsilon \, \log \log p)^{3n+3}
  (\log p)^{\varepsilon \, (3n+2) \log d} & \text{ if } n\ge1,    
  \end{cases}
\end{equation}
such that, for all $p\nmid \fA_{L}$, for all but at
most $\kappa$ values of $\bft \in \ov \F_p^n$,
$$
\max_{\substack{1\le \nu \le r \\ 1\le j \le s}}\# \Orb_{\bfF_{\nu, \bft}}( \bfa_{j} \bmod p) >
\varepsilon \, {\log\log p}.
$$

The bound~\eqref{eq:7} implies that there is a constant
$c$, depending on the parameters $m$, $n$, $r$, $s$ and $h$,  such that
$\fA_{L} <p$ for all $p\ge c$.  
For those primes $p$, we have that $p\nmid \fA_{L}$ and  the result follows.
 \end{proof}

\begin{proof}[Proof of Corollary~\ref{cor:2}]
  Let $Q\ge 2$. Theorem~\ref{thm:1} applied with
  $L=\lfloor{\varepsilon \log Q}\rfloor$ implies that there is an
  integer $\fA_{L}\ge1 $ with
\begin{equation}
\label{eq:2}
  \log\fA_{L} \ll_{m,n,r,s,h} 
\begin{cases}
 ( \log Q)  \, Q^{ \varepsilon \log d} & \mathrm{ if } \ n= 0, \\  
 ( \log Q)^{3n+3}  \, Q^{\varepsilon (3n+2) \log d}
& \mathrm{ if } \ n\ge 1,
\end{cases}
\end{equation}
such that, for all $p\le Q$ with  $p\nmid \fA_{L}$, for all but at most $\kappa$
values of $\bft \in \ov \F_p^n$,
\begin{equation}
  \label{eq:8}
 \max_{\substack{1\le \nu \le r \\ 1\le j \le s}}\# \Orb_{\bfF_{\nu, \bft}}( \bfa_{j} \bmod p) >
 \varepsilon \log Q \ge \varepsilon \log p.
\end{equation}
The divisibility $p\mid \fA_{L}$ is possible for at most
$\log\fA_{L} / \log 2$ primes $p$. Hence, the
 bound~\eqref{eq:2}
implies that the set of primes $p\le Q$ not satisfying~\eqref{eq:8} is
of size ${\mathcal O} _{m,n,r,s,h} \(Q^{\gamma}\)$ for an exponent
$0<\gamma<1$. Hence, this subset of primes has natural density 0, and
thus its complement has natural density 1, as stated.
 \end{proof}

We now treat polynomial systems depending on a single parameter $T$.

 \begin{proof}[Proof of  Theorem~\ref{thm:Cycl 2 Poly Univ}]
For each $\nu=1,\ldots, r$, $\bfi\in \{1,\ldots, m\}^{L}$  and $j=1,\ldots,
s$, consider the polynomial
$$
\Psi_{\nu,\bfi,j}=\prod_{k=0}^{L-1}\(F_{\nu,i_{k+1}}^{(L)}(\bfa_{j}, T) -
F_{\nu,i_{k+1}}^{(k)}(\bfa_{j}, T)\) \in \Z[T].
$$
As in the proof of Theorem~\ref{thm:1}, a point $t\in \C$ verifies that
$$
\max_{\substack{1\le \nu \le r \\ 1\le j \le s}}\# \Orb_{\bfF_{\nu},t}(\bfa_{j}) \le L
$$
if and only if $\Psi_{\nu,\bfi,j}(t)=0$ for all $\nu$, $\bfi$ and $j$.
The set of such $t$ is contained in the set~\eqref{eq:6} and,
by the hypothesis on this latter, the number of such values of $t$ is
finite and bounded above by the constant $\kappa$.

As in~\eqref{eq:1}, the number of such polynomials
is $r s m^{L}$, and their degree and height are bounded by
\begin{equation}\label{eq:10}
  \deg \Psi_{\nu,\bfi,j} \ll_{m,r,s,h} L d^{L } \mand \h(\Psi_{\nu,\bfi,j}) \ll_{m,r,s,h} L d^{L }.
\end{equation}

Let $H\in \Z[T]$ be a primitive polynomial that is a greatest common
divisor in $\Q[T]$ of the polynomials $\Psi_{\nu,\bfi,j}$, and write
$$
\Phi_{0}, \ldots, \Phi_{u}
$$
for the distinct nonzero polynomials of the form
$\Phi_{l}=\Psi_{\nu,\bfi,j}/H$ for some $\nu,\bfi$ and $j$.  We have
that $u< rsm^{L}$, and we deduce from~\eqref{eq:10} and
Lemma~\ref{lem:HeightProd}\eqref{item:2} that, for $l=0,\ldots, u$,
$$
  \deg \Phi_{l} \ll_{m,r,s,h} L d^{L } \mand  \h(\Phi_{l}) \ll_{m,r,s,h} L d^{L }.
$$
Let $\bfU=(U_{1},\ldots, U_{u})$ be a group of variables and set
$$\Phi=\sum_{l=1}^{u}U_{l}\Phi_{l} \mand    R=\Res (\Phi_{0}, \Phi) \in \Z[\bfU]. 
$$
Since the polynomials $\Phi_{l}$ are coprime, it follows that $\Phi_{0}$ and $\Phi$
are coprime. Moreover, $\Phi_{0}$ is nonzero and so $R$ is nonzero
too. Using Sylvester's determinantal formula for the resultant and
Lemma~\ref{lem:HeightProd}\eqref{item:2}, we deduce that
$$
\deg R \ll_{m,r,s,h} L^{2}d^{2L} \mand  \h(R) \ll_{m,r,s,h} L^{2}d^{2L},
$$
and we set  $\fA_{L}\in \Z\setminus \{0\}$ as any nonzero coefficient of this
polynomial. 

Let $p$ be a prime and denote by $\Lambda_{p}\subseteq  \ov{\F}_{p} $
the subset of   $t\in  \ov{\F}_{p} $ such that 
$$
\max_{\substack{1\le \nu \le r \\ 1\le j \le s}}\# \Orb_{\bfF_{\nu,\bft}}(\bfa_{j} \bmod p) \le L. 
$$
As before, this coincides with the zero set of the reductions of the
polynomials $\Psi_{\nu,\bfi,j}$ modulo $p$. Let $\kappa+e_p$ be the
cardinality of this set, with $e_{p}\in \Z$. We denote by
$H_{p}, \Phi_{p}, \Phi_{l,p} \in \F_{p}[T]$ the reductions modulo $p$
of $H, \Phi, \Phi_{l}\in \Z[T]$, $l=0,\ldots, u$, respectively.

If $t\in \Lambda_{p}$, then either $H_{p}(t)=0$ or $\Phi_{l}(t)=0$,
$l=0,\ldots, u$. The number of zeros of $H_{p}$ is bounded by
$\deg H\le \kappa$.  The number of common zeros in $\ov{\F}_{p} $ of the
polynomials $\Phi_{l,p}$ coincides with the number of common zeros in
$\ov{\F}_{p} $ of $\Phi_{0,p}$ and $\Phi_{p}$. By Theorem~\ref{thm:3},
this number is bounded above by $\ord_{p}\,R$, the largest power of
$p$ dividing all coefficients of $R$.   In turn, this is also bounded
above by $\ord_{p}\,\fA_{L}$ (which is the $p$-adic order of one of the 
non-zero coefficients of $R$).  It follows that
$$
  \max\{0,e_{p}\}\le \ord_{p}\,\fA_{L},
$$
proving the result. 
 \end{proof}

 \begin{proof}[Proof of Corollary~\ref{cor:4}]
Let $Q\ge 2$. Theorem~\ref{thm:Cycl 2 Poly Univ} applied with $L=\lfloor
\varepsilon \log Q \rfloor$ implies that there is an integer
$\fA_{L}\ge 1$ such that
$$
  \log\fA_{L}\ll_{m,r,s,h} (\log Q)^{2} Q^{ 2 \varepsilon\log d}.
$$
such that, for every  $p$, for all but at
most $\kappa+ \ord_{p}\, \fA_{L}$ values of $t \in \ov \F_p$,
$$
\max_{\nu, j}\#
\Orb_{\bfF_{\nu,t}}(\bfa_{j} \bmod p) > \varepsilon \log
Q \ge  \varepsilon \log p.
$$
The statement follows by taking any $ 2 \varepsilon\log d <\gamma <1$
and $c_{p}=\ord_{p}\,\fA_{L}$.
 \end{proof}

\section*{Acknowledgements}

During the preparation of this paper, Chang was partially supported by the NSF Grants DMS~1301608, 
D'Andrea 
by the Spanish MEC research project MTM2013-40775-P, Ostafe  by the 
UNSW Vice Chancellor's Fellowship, Shparlinski by the
ARC Grant~DP140100118, and Sombra by the Spanish
MINECO research project MTM2015-65361-P.

\end{document}